\newtheorem{thm}{Theorem}[section]
\newtheorem{lem}[thm]{Lemma}
\newtheorem{prop}[thm]{Proposition}
\newtheorem{cor}[thm]{Corollary}
\theoremstyle{defn}
\newtheorem{defn}[thm]{Definition}
\newtheorem{example}[thm]{Example}
\theoremstyle{rem}
\newtheorem{remark}[thm]{Remark}
\newenvironment{exam}{\vspace{.35cm}\begin{example}\rm ~}{\end{example}}
\numberwithin{equation}{section}
\def\beq*{\begin{eqnarray*}}
\def\eeq*{\end{eqnarray*}}
\def\bnu{\begin{enumerate}}
\def\enu{\end{enumerate}}
\def\bit{\begin{itemize}}
\def\eit{\end{itemize}}
\newcommand{\G}{\mathbb{G}}
\newcommand{\HH}{\mathbb{H}}
\newcommand{\CL}{{\rm C}_0(\G)}
\newcommand{\MG}{{\rm M}(\G)}
\newcommand{\LL}{{\rm L}^1(\G)}
\newcommand{\LTO}{{\rm L}^2(\G)}
\newcommand{\LG}{{\rm L}^\infty(\G)}
\newcommand{\N}{\mathcal{N}}
\newcommand{\C}{\mathbb{C}}
\newcommand{\LUC}{{\rm LUC}}
\newcommand{\RUC}{{\rm RUC}}
\newcommand{\HS}[1]{{#1}_{_{{\rm HS}}}}
\newcommand{\K}{\mathcal{K}}
\newcommand{\B}{\mathcal{B}}
\newcommand{\CR}{\mathcal{CR}}
\newcommand{\A}{\mathcal{A}}
\newcommand{\CB}{\mathcal{CB}}
\newcommand{\net}[1]{\{#1_i\}}
\newcommand{\tow}{\stackrel{w}{\longrightarrow}}
\newcommand{\tosw}{\stackrel{w^*}{\longrightarrow}}
\newcommand{\M}{\mathfrak{M}}
\newcommand{\NN}{\mathfrak{N}}
\newcommand{\vt}{\otimes}
\newcommand{\inj}{\check{\otimes}}
\newcommand{\proj}{\hat{\otimes}}
\newcommand{\no}[1]{\|#1\|}
\newcommand{\cbno}[1]{\|#1\|_{_{\inj}}}
\newcommand{\MX}[1]{\mathbb{M}_n(#1)}
\newcommand{\fs}{\hspace{0.3cm}}
 \newcommand{\lto}{\longrightarrow}
\begin{document}
\title[Reiter's properties  for the actions]{Reiter's properties  for the actions of locally compact quantum groups on von Neumann algebras  }
\author{M. Ramezanpour}
\address{Department of Pure Mathematics, Ferdowsi University of Mashhad, P. O. Box 1159, Mashhad 91775, Iran.}
\email{mo\_\ ra54@stu-math.um.ac.ir, md\_\ ramezanpour@yahoo.com.}

\author{H.\ R.\ Ebrahimi  Vishki}
\address{Department of Pure Mathematics and Centre of Excellence
in Analysis on Algebraic Structures (CEAAS), Ferdowsi University of Mashhad\\
P. O. Box 1159, Mashhad 91775, Iran.}
\email{vishki@ferdowsi.um.ac.ir, vishki@ams.org.}
\subjclass[2000]{43A07, 22F05, 22D10, 22D15, 46L07, 46L65, 46L89, 81R50, 81R15, 46L55}
\keywords{Locally compact quantum group, action, unitary corepresentation, amenability, Hopf-von Neumann algebra, Reiter property}
\maketitle

\begin{abstract}The  notion of an action of a locally compact quantum group on a von Neumann algebra is studied from the amenability point of view. Various Reiter's conditions for such an action are discussed. Several applications to some  specific  actions related to certain representations and corepresentaions  are presented.
\end{abstract}

\section{\bf Introduction}

In order to extend some theories  of harmonic analysis from abelian locally compact groups (especially, to restore the so-called Pontrjagin duality theorem) to non-abelian ones, a more general object---Kac algebra, which covers all locally compact groups---constructed by some authors in early 70's; whose complete account can be found  in \cite{E-S}. The notion of quantum group introduced by V.G. Drinfeld and improved by others using an operator algebraic approach. Their approach did not satisfy some of the axioms of the Kac algebras and motivated more efforts to introduce a more general theory.  Some improvement in this direction were made by S.L. Woronowicz, S. Baaj, G. Skandalis and A. Van Daele. Finally J. Kustermans and S. Vaes, \cite{K-V, K-V_von}, introduced  the concept of locally compact quantum group along   a comprehensive set of  axioms  which covers the notion of Kac algebras, ~\cite{E-S}, and also quantum  groups. Some of the most famous locally compact quantum groups (which have  been extensively studied in abstract harmonic analysis) are   ${\rm L}^\infty(G)$ and  ${\rm VN}(G)$, in which $G$ is a locally compact group. In spite of  these two algebras have different  feature in abstract harmonic analysis but they have mostly a unified framework from the locally compact quantum group point of view. Some aspects of abstract harmonic analysis on locally compact groups are intensively  extended by V. Runde,  \cite{Run, R_uniform}, to the framework of locally compact quantum groups. The same discipline continued by the authors in  \cite{R-V}. Daws and Runde, \cite{R_ret}, introduced the Reiter's properties $P_1$ and $P_2$ for a general locally compact quantum  group. Following \cite{R_ret}, in  the present paper we come with a slightly different approach to give  various Reiter's properties  for the action of a locally compact quantum group on a von Neumann algebra. This method  generalizes not only many results of \cite{B-T, Bek, L-P, Ng, Stok} but also allows us to present the main results of \cite{R_ret} with shorter proofs.\\

Before proceeding, we fix some notations.
If $H$ is a Hilbert space, then $\B(H)$ and $\K(H)$ denote the algebra of all
bounded and compact operators on $H$, respectively. If $B$ is a $*$-algebra,
$M(B)$ denotes the multiplier algebra of $B$. As usual $\otimes$ denotes tensor product; depending on the context, it may be the algebraic tensor
product of linear spaces, the tensor product of Hilbert spaces, the minimal tensor product of C$^*$-algebras or the tensor product of von Neumann algebras. If $E$ and $F$ are operator spaces, as in \cite{E-R}, we denote the completely bounded operators from $E$ to $F$ by $\CB(E, F)$ and use $\inj$ and $\proj$ to denote the injective and projective tensor product of operator spaces, respectively; it should mentioned that in the C$^*$-algebraic setting the injective and minimal tensor products are coincide. For $n\in\mathbb{N}$, the $n$-th matrix level of $E$ is denoted by $\MX{E}$ and for a linear map $T:E\to F$, we write
$T^{(n)}:\MX{E}\to\MX{F}$ for the $n$-th amplification of $T$. If $H$ is a Hilbert space, we denote the column operator space over $H$ by $H_c$.

\section{{\bf Amenability of $(\NN,\alpha)$}}\label{secamen}

In this section we introduce  the amenability of $(\NN,\alpha)$, where $\alpha$ is a left action of a Hopf-von Neumann algebra $(\M,\Gamma)$ on a von Neumann algebra $\NN$. And then  we investigate some of its equivalent formulations.
 First, we formulate and briefly the notions of
 Hopf-von Neumann algebras.

\begin{defn}
A Hopf-von Neumann algebra is a pair $(\M,\Gamma)$, where $\M$ is
a von Neumann algebra and $\Gamma:\M\lto\M\vt\M$  is a  normal, unital $*$-homomorphism satisfying $(\iota\otimes\Gamma)\Gamma=(\Gamma\otimes\iota)\Gamma$.
\end{defn}

Let $(\M,\Gamma)$ be a Hopf-von Neumann algebra then the unique predual $\M_*$ of $\M$ turns into a Banach algebra under the product $*$ given by:
$$(\omega*\omega')(x)=(\omega\otimes \omega')(\Gamma(x))\quad  (\omega,\omega'\in\M_*, x\in\M).$$

Let $G$ be a locally compact group. For the so-called  Hopf-von Neumanna algebras $({\rm L}^\infty(G),\Gamma_a)$ and $({\rm VN}(G),\Gamma_s)$ we use the notations $\HH_a$ and $\HH_s$, respectively, where
\beq*
&&\Gamma_a(f)(s,t)=f(st)\fs\fs (f\in {\rm L}^\infty(G),\ s,t\in G),\\
&&\Gamma_s:{\rm VN}(G)\lto {\rm VN}(G)\vt {\rm VN}(G),\fs\fs\lambda(t)\lto\lambda(t)\otimes\lambda(t)\fs\fs(t\in G);
\eeq*
in which, $\lambda$ is  the left
regular representation of $G$ on ${\rm L}^2(G)$. It is worthwhile mentioning that for $\HH_a$ the product $*$ imposed on ${\rm L}^1(G)$  is just the usual convolution on ${\rm L}^1(G)$  whereas for $\HH_s$ it yields the usual pointwise product on ${\rm A}(G)$.

For the rest of this section we fix a Hopf-von Neumann algebra $\HH=(\M,\Gamma)$. A state $M$ of $\M$ is called
a left invariant mean for $\HH$ if
\beq*
M((\omega\otimes\iota)(\Gamma(x)))=\omega(1)M(x)\fs\fs(\omega\in\M_*,~x\in\M).
\eeq*
If there is a left invariant mean on $\HH$, we call $\HH$ left amenable.
We shall require the notion of left amenability for a more general case, as presented in the next definition.

\begin{defn}\label{leftact}
Let $\HH$ be a Hopf-von Neumann algebra, and let $\NN$ be a von
Neumann algebra. A left action of $\HH$ on  $\NN$ is a normal, injective $*$-homomorphism
$\alpha:\NN\lto\M\vt\NN$ such that $(\iota\otimes\alpha)\alpha=(\Gamma\otimes\iota)\alpha$. A state $N$ of $\NN$ is called an $\alpha$-left invariant mean if
\beq*
N((\omega\otimes\iota)(\alpha(y)))=\omega(1)N(y)\fs\fs(\omega\in\M_*,~y\in\NN).
\eeq*
If there is an $\alpha$-left invariant mean on $\NN$, we call $(\NN,\alpha)$ left amenable.
\end{defn}

\begin{defn}
Let $\HH=(\M,\Gamma)$ be a Hopf-von Neumann algebra, we say that $U$ is a unitary left corepresentation of $\HH$ and write $U\in\CR(\HH)$  if there is a Hilbert space $H_U$ such that $U$ is an unitary element of  $\M\vt\B(H_U)$ with $(\Gamma\otimes\iota)(U)=U_{13}U_{23}.$
\end{defn}

\begin{exam}\label{exam}
\bnu
\item[(i)] The comultiplication $\Gamma$ is a left action of $\HH$ on $\M$ and $\HH$ is amenable if and only if the pair $(\M,\Gamma)$ is left  amenable.
\item[(ii)] If $U\in\CR(\HH)$ then $\alpha_U:\B(H_U)\lto \M\vt\B(H_U)$, where
     $\alpha_U(x)=U^*(1\otimes x)U\ (x\in\B(H_U))$,
 is a left action of $\HH$ on $\B(H_U)$ and $(\B(H_U),\alpha_U)$ is left amenable if and only if $U$ is left amenable in the sense of \cite[Defiition 4.1]{B-T}. Moreover, $(\Gamma\otimes\iota)(U^*)=U^*_{23}U^*_{13}$ and
 $\alpha_{U^*}$ is also  a left action of $\HH$ on $\B(H_U)$. $(\B(H_U),\alpha_{U^*})$ is left amenable if and only if $U$ is right amenable in the sense of \cite[Defiition 4.1]{B-T}.
 \item[(iii)] If $G$ is a locally compact group and $\pi$ is a unitary representation of $G$ on a Hilbert space $H_\pi$,  then $\alpha_\pi:\B(H_\pi)\lto L^{\infty}(G)\vt\B(H_\pi)\simeq L^{\infty}(G,\B(H_\pi))$, where $\alpha_\pi(T)(s)=\pi(s^{-1})T\pi(s)$\  $(T\in\B(H_\pi), s\in G)$ is a left action of $\HH_a$ on the von Neumann algebra $\B(H_\pi)$. Now $(\B(H_\pi),\alpha_\pi)$ is left amenable if and only if $\pi$ is
     amenable in the sense of \cite[Definition 1.1]{Bek}.
 \item[(iv)] If $G$ is a locally compact group and $s\lto\beta_s$ is an action of $G$ on the von Neumann algebra $\NN$, then $\alpha_G:\NN\lto L^{\infty}(G)\vt\NN\simeq L^{\infty}(G,\NN)$, where
 $\alpha_G(y)(s)=\beta_{s^{-1}}(y)$\ $(y\in\NN, s\in G)$
 is a left action of $\HH_a$ on $\NN$. Moreover by \cite[Proposition I.3]{Enok}, there is a bijective correspondence  between actions of $G$ and left action of  $\HH_a$ on the von Neumann algebra $\NN$. $(\NN,\alpha_G)$ is left amenable if and only if $\NN$ is $G$-amenable in the
 sense of \cite[Definition 3.1]{L-P} or $G$ acts amenably on $\NN_*$ in the sense of \cite[Definition 1.4]{Stok}.
 \enu
 \end{exam}

We commence with the following proposition.
\begin{prop}
For a Hopf-von Neumann algebra $\HH$ the following assertions are equivalent:
\bit
\item[(i)] $\HH$ is left amenable.
\item[(ii)] For every von Neumann algebra $\NN$ and every left action $\alpha$ of $\HH$ on
it, $(\NN,\alpha)$ is left amenable.
\eit
\end{prop}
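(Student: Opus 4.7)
The plan is to dispatch the easy direction and then produce an explicit construction for the hard one. The implication (ii) $\Rightarrow$ (i) is immediate from Example \ref{exam}(i): the comultiplication $\Gamma$ is itself a left action of $\HH=(\M,\Gamma)$ on $\M$, so assuming (ii) forces $(\M,\Gamma)$ to be left amenable, which by that example is exactly left amenability of $\HH$.

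For (i) $\Rightarrow$ (ii), let $M$ be a left invariant mean on $\M$ and let $\alpha:\NN\lto\M\vt\NN$ be a left action. Fix any normal state $\phi\in\NN_*$ (e.g.\ a vector state) so that the slice map $\iota\otimes\phi:\M\vt\NN\lto\M$ is a normal unital completely positive map. Define
$$N(y)\;:=\;M\bigl((\iota\otimes\phi)\alpha(y)\bigr)\qquad (y\in\NN).$$
Because $\alpha$ is a unital $*$-homomorphism and both $M,\phi$ are states, $N$ is manifestly positive, and $N(1)=M((\iota\otimes\phi)(1\otimes 1))=M(1)=1$, so $N$ is a state on $\NN$.

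The real work is the invariance computation. For $\omega\in\M_*$ and $y\in\NN$, I would use the coaction identity $(\iota\otimes\alpha)\alpha=(\Gamma\otimes\iota)\alpha$ to shuffle the slice maps: expanding the middle equality on elementary tensors $\alpha(y)=\sum c_j\otimes d_j$ yields
$$(\iota\otimes\phi)\alpha\bigl((\omega\otimes\iota)\alpha(y)\bigr)
=(\omega\otimes\iota\otimes\phi)(\iota\otimes\alpha)\alpha(y)
=(\omega\otimes\iota\otimes\phi)(\Gamma\otimes\iota)\alpha(y)
=(\omega\otimes\iota)\Gamma\bigl((\iota\otimes\phi)\alpha(y)\bigr).$$
Applying $M$ and then invoking left invariance of $M$ on $x:=(\iota\otimes\phi)\alpha(y)\in\M$ gives
$$N\bigl((\omega\otimes\iota)\alpha(y)\bigr)=M\bigl((\omega\otimes\iota)\Gamma(x)\bigr)=\omega(1)M(x)=\omega(1)N(y),$$
which is the $\alpha$-invariance required by Definition \ref{leftact}.

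The only step that needs genuine care is the tensor-leg identity above: it is essentially the coassociativity $(\iota\otimes\alpha)\alpha=(\Gamma\otimes\iota)\alpha$ sliced against $\omega$ in the first leg and $\phi$ in the last leg, but writing it out cleanly requires keeping track of which leg each functional kills. Once that identity is in hand, the proof is purely formal; no approximation, no compactness argument, and no structure theory for $\HH$ beyond coassociativity and the definition of a left invariant mean is needed.
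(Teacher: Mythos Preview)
Your proof is correct and is essentially identical to the paper's own argument: both dispatch (ii)$\Rightarrow$(i) by specializing to the action $\Gamma$, and for (i)$\Rightarrow$(ii) both fix a normal state $\phi\in\NN_*$, define $N(y)=M((\iota\otimes\phi)\alpha(y))$, and verify invariance via the same chain of slice-map identities coming from $(\iota\otimes\alpha)\alpha=(\Gamma\otimes\iota)\alpha$. The only difference is that you spell out the positivity and unitality of $N$ and the leg-bookkeeping in slightly more detail than the paper does.
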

\begin{proof}
(ii)$\Rightarrow$(i) is clear. For the converse, let $M$ be a left invariant mean for $\HH$, and fix any state
$\nu\in\NN_*$ and define $N\in\NN^*$, by $N(y)=M((\iota\otimes\nu)(\alpha(y)))$\ ($y\in\NN$). For $\omega\in\M_*$ and $y\in\NN$ we have
\beq*
N((\omega\otimes\iota)\alpha(y))&=&M((\omega\otimes\iota\otimes\nu)(\iota\otimes\alpha)\alpha(y))\\
&=&M((\omega\otimes\iota\otimes\nu)(\Gamma\otimes\iota)\alpha(y))\\
&=&M((\omega\otimes\iota)\Gamma((\iota\otimes\nu)\alpha(y)))\\
&=&\omega(1)N(y),
\eeq*
that is, $N$ is an  $\alpha$-left invariant mean.
\end{proof}

For the left action $\alpha$ of $\HH$ on $\NN$, let $\alpha_*$ denote the restriction of  the adjoint of $\alpha$ to $(\M\vt\NN)_*$. Thus we have, $\alpha_*:\M_* \hat{\otimes}\NN_*\lto\NN_*$,  \cite[Corollary 4.1.9]{E-R}.The following result needs a standard arguments and so omitted.

\begin{prop}\label{FP1}
Let $\alpha$ be a left action of $\HH$ on a von Neumann algebra $\NN$, then the following assertions are equivalent:
\bit
\item[(i)] For any $\epsilon>0$ and any finite subset $\{\omega_1,\omega_2,...,\omega_n\}$ of $\M_*$, there exists a state $\nu\in\NN_*$ such that
$\no{\alpha_*(\omega_k\otimes\nu)-\omega_k(1)\nu}<\epsilon$, for $k=1,2,...,n$.
\item[(ii)] There is a net $\net{\nu}$ of state in $\NN_*$ such that
$\lim_i\no{\alpha_*(\omega\otimes\nu_i)-\omega(1)\nu_i}=0$, for all $\omega\in\M_*$.
\item[(iii)] There is a net $\net{\nu}$ of state in $\NN_*$ such that
$\alpha_*(\omega\otimes\nu_i)-\omega(1)\nu_i\tow 0$ in $\NN_*$, for all $\omega\in\M_*$.
\eit
\end{prop}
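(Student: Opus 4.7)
The plan is to prove the standard cycle (i)$\Rightarrow$(ii)$\Rightarrow$(iii)$\Rightarrow$(i). Two of the implications are routine, and the only substantive step is (iii)$\Rightarrow$(i), which requires a Day/Namioka-style convexity argument.

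First, (i)$\Rightarrow$(ii) is a routine net-indexing device. I would let $\Lambda$ be the directed set of pairs $i=(F,\epsilon)$ where $F\subset\M_*$ is finite and $\epsilon>0$, ordered by $(F,\epsilon)\le(F',\epsilon')$ iff $F\subseteq F'$ and $\epsilon'\le\epsilon$. Using (i), for each $i=(F,\epsilon)$ pick a state $\nu_i\in\NN_*$ with $\no{\alpha_*(\omega\otimes\nu_i)-\omega(1)\nu_i}<\epsilon$ for every $\omega\in F$. Given any $\omega_0\in\M_*$, the tail $i\ge(\{\omega_0\},\epsilon)$ gives the required estimate, yielding (ii). The implication (ii)$\Rightarrow$(iii) is immediate since norm convergence to $0$ forces weak convergence to $0$ in $\NN_*$.

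The heart of the proof is (iii)$\Rightarrow$(i), and this is the step I expect to be the main obstacle. Fix $\epsilon>0$ and $\omega_1,\dots,\omega_n\in\M_*$, and consider the linear map
\[
T:\NN_*\lto\NN_*\oplus_1\cdots\oplus_1\NN_*,\fs\fs T(\nu)=\bigl(\alpha_*(\omega_k\otimes\nu)-\omega_k(1)\nu\bigr)_{k=1}^n.
\]
Let $S\subset\NN_*$ denote the convex set of states of $\NN$; then $T(S)$ is a convex subset of the product Banach space. Hypothesis (iii) supplies a net $\net{\nu}\subset S$ with $T(\nu_i)\to 0$ coordinatewise in the weak topology, so $0$ lies in the weak closure of $T(S)$. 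Since $T(S)$ is convex, the Hahn--Banach theorem (in the guise of Mazur's theorem: weak and norm closures of convex sets coincide) places $0$ in the norm closure of $T(S)$. Hence there exists a state $\nu\in S$ with $\sum_{k=1}^n\no{\alpha_*(\omega_k\otimes\nu)-\omega_k(1)\nu}<\epsilon$, which in particular gives (i).

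The only genuine subtlety is ensuring that $T$ is well-defined and continuous from $\NN_*$ into the $\ell^1$-direct sum, which follows from the boundedness of $\alpha_*$ (it is a contraction by \cite[Corollary 4.1.9]{E-R}) together with the finiteness of the index set; and that convex combinations of states remain states, which is immediate. With these in place the Mazur step closes the loop.
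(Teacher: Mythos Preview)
Your proof is correct and complete. The paper itself omits the proof entirely, remarking only that it ``needs a standard argument''; the Day--Namioka/Mazur convexity step you give for (iii)$\Rightarrow$(i) is exactly the standard argument intended, and the remaining implications are handled as one would expect.
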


We say that a pair $(\NN,\alpha)$ has the Reiter's property $FP_1$ if it satisfies in one of the equivalent conditions of  Proposition \ref{FP1}. In the cases presented in parts (i), (iii) and (iv) of Example \ref{exam} we respectively have: $(\M,\Gamma)$ has Reiter's property $FP_1$ if and only if the condition (xi) of \cite[Theorem 2.4]{E-S.P} holds. $(\B(H_\pi),\alpha_\pi)$ has Reiter's property $FP_1$ if and only if the representation $\pi$ satisfies  in condition (iii) of \cite[Theorem 3.6]{Bek};  and $(\NN,\alpha_G)$ has Reiter's property $FP_1$ if and only if $s\to\beta_s$ satisfies  in condition (3) of \cite[Corollary 1.12]{Stok}.\\

For an action $\alpha$ of $\HH$ on a von Neumann algebra  $\NN$ we define the closed linear subspace $\LUC(\NN,\alpha)$ of $\NN$ by,
$$\LUC(\NN,\alpha)=\overline{\rm span}\{(\omega\otimes\iota)\alpha(y) ;\fs y\in\NN, \omega\in\M_*\}.$$
Since $(\omega\otimes\iota)\alpha(1)=\omega(1)1$ and  $((\omega\otimes\iota)\alpha(y))^*=(\bar{\omega}\otimes\iota)\alpha(y^*)$, where $\bar{\omega}(x)=\overline{\omega(x^*)}$,  $\LUC(\NN,\alpha)$ contains the identity and is also self adjoint.

In the cases (i), (iii) and (iv) presented in Example \ref{exam}, we respectively have: $\LUC(\M,\Gamma)=\LUC(\G)$ as defined in \cite[Definition 2.2]{R_uniform}, $\LUC(\B(H_\pi), \alpha_\pi)=X(H_\pi)$ as defined in \cite[Definition 3.1]{Bek} and $\LUC(\NN, \alpha_G)={\rm UC}(\NN)$ as defined in \cite[Definition 1.5]{Stok}.

The next result contains some fairly familiar characterizations of amenability of an action which covers  \cite[Theorem 2.4]{E-S.P}, \cite[Proposition 6.4]{B-C-T.rep}, \cite[Theorem 3.5, 3.6]{Bek} and \cite[Proposition 1.10, Corollary 1.12]{Stok} for the special cases presented in Example \ref{exam}, respectively. See also \cite[Theorem 3.4]{R_uniform}.

\begin{lem}\label{amen}
Let $\alpha$ be a left action of $\HH$ on a von Neumann algebra $\NN$. Consider the following assertions:
\bit
\item[(i)] $(\NN,\alpha)$ has Reiter's property $FP_1$.
\item[(ii)] $(\NN,\alpha)$ is left amenable.
\item[(iii)] There is a left invariant mean on $\LUC(\NN,\alpha)$.
\eit
Then $(i) \Leftrightarrow (ii) \Rightarrow (iii)$ and these are equivalent in the case where $\M_*$ has a bounded approximate identity consists of states.
\end{lem}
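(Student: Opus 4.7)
The strategy will be to handle four implications: $(i)\Rightarrow(ii)$ by a weak$^*$-cluster-point argument, $(ii)\Rightarrow(i)$ by a Day--Hahn--Banach convexity argument, $(ii)\Rightarrow(iii)$ by restriction, and the final $(iii)\Rightarrow(ii)$ under the BAI hypothesis by averaging against the BAI. The last step is the main obstacle.

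For $(i)\Rightarrow(ii)$, I would take the net $\{\nu_i\}$ of normal states guaranteed by Proposition \ref{FP1} and let $N\in\NN^*$ be any weak$^*$-cluster point; the set of states is weak$^*$-closed, and evaluating the defining convergence at each $y\in\NN$ forces $N((\omega\otimes\iota)\alpha(y))=\omega(1)N(y)$ in the limit.

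For $(ii)\Rightarrow(i)$, the obstacle is that an abstract mean $N\in\NN^*$ need not be norm-approximated in $\NN_*$. I would fix $\omega_1,\dots,\omega_n\in\M_*$ and consider the convex set
\[
C=\bigl\{\bigl(\alpha_*(\omega_k\otimes\nu)-\omega_k(1)\nu\bigr)_{k=1}^{n}:\nu\text{ a normal state on }\NN\bigr\}\subset(\NN_*)^{n}.
\]
If $0\notin\overline{C}^{\|\cdot\|}$, Hahn--Banach separation in $(\NN_*)^{n}$ (whose dual is $\NN^{n}$) would yield $(x_1,\dots,x_n)\in\NN^{n}$ and $\delta>0$ with $\operatorname{Re}\nu(y)\geq\delta$ for every normal state $\nu$, where $y=\sum_{k}[(\omega_k\otimes\iota)\alpha(x_k)-\omega_k(1)x_k]$. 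Since normal states detect positivity on $\NN$, the self-adjoint part of $y$ dominates $\delta\cdot 1$, so applying the $\alpha$-LIM $N$ from (ii) gives $\operatorname{Re}N(y)\geq\delta$; but invariance forces $N(y)=0$, a contradiction. For $(ii)\Rightarrow(iii)$, the restriction $N|_{\LUC(\NN,\alpha)}$ will serve: coassociativity $(\iota\otimes\alpha)\alpha=(\Gamma\otimes\iota)\alpha$ shows that $(\omega\otimes\iota)\alpha$ preserves $\LUC(\NN,\alpha)$, so the invariance equation descends to the subspace.

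The hard step, $(iii)\Rightarrow(ii)$, proceeds under the BAI hypothesis as follows. Fix a BAI $\{e_i\}$ of states in $\M_*$ and an LIM $M$ on $\LUC(\NN,\alpha)$, and define $N\in\NN^*$ as any weak$^*$-cluster point of the net $y\mapsto M((e_i\otimes\iota)\alpha(y))$. Statehood is immediate, since each map $y\mapsto(e_i\otimes\iota)\alpha(y)$ is unital and positive. For invariance, coassociativity yields the twin identities
\[
(e_i\otimes\iota)\alpha\bigl((\omega\otimes\iota)\alpha(y)\bigr)=((\omega*e_i)\otimes\iota)\alpha(y),\qquad
(\omega\otimes\iota)\alpha\bigl((e_i\otimes\iota)\alpha(y)\bigr)=((e_i*\omega)\otimes\iota)\alpha(y).
\]
Applying the invariance of $M$ to $z=(\omega\otimes\iota)\alpha(y)\in\LUC(\NN,\alpha)$ with $\eta=e_i$ (and using $e_i(1)=1$) collapses the first identity to $M((\omega\otimes\iota)\alpha(y))$, whence $N((\omega\otimes\iota)\alpha(y))=M((\omega\otimes\iota)\alpha(y))$. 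Applying invariance of $M$ to $z'=(e_i\otimes\iota)\alpha(y)\in\LUC(\NN,\alpha)$ with $\eta=\omega$ converts the second to $M(((e_i*\omega)\otimes\iota)\alpha(y))=\omega(1)M((e_i\otimes\iota)\alpha(y))$; since $\|e_i*\omega-\omega\|\to 0$ by the BAI property, the left side converges to $M((\omega\otimes\iota)\alpha(y))$ and the right side to $\omega(1)N(y)$ along the cluster-point subnet, producing $N((\omega\otimes\iota)\alpha(y))=\omega(1)N(y)$. The essential bookkeeping is that coassociativity produces two inequivalent convolution patterns, one neutralized by $e_i(1)=1$ and the other tamed by the BAI limit.
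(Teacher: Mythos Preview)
Your proof is correct and follows essentially the same plan as the paper. The one genuine difference is in $(ii)\Rightarrow(i)$: the paper takes a shorter route by invoking condition (iii) of Proposition~\ref{FP1} directly. Since normal states are weak$^*$-dense in the state space of $\NN$, one chooses a net $\{\nu_i\}$ of normal states with $\nu_i\to N$ in $\sigma(\NN^*,\NN)$; evaluating at any $y\in\NN$ then gives $\alpha_*(\omega\otimes\nu_i)-\omega(1)\nu_i\to 0$ in $\sigma(\NN_*,\NN)$, which is exactly Proposition~\ref{FP1}(iii). Your Day--Hahn--Banach separation argument reaches the norm form (Proposition~\ref{FP1}(i)) from scratch, which is fine but duplicates the convexity work already packaged in that proposition. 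For $(iii)\Rightarrow(ii)$ the two arguments are the same in substance: the paper writes $N(y)=\lim_i M_0((\omega_i\otimes\iota)\alpha(y))$ and uses the two-sided BAI identity $\lim_i\omega*\omega_i=\lim_i\omega_i*\omega=\omega$ in one chain of equalities, while you take a weak$^*$-cluster point and invoke invariance of $M$ twice---once with $e_i$ (using $e_i(1)=1$) to identify $N|_{\LUC}$ with $M$, and once with $\omega$ together with $\|e_i*\omega-\omega\|\to 0$ to close the loop. Note that your ``first identity'' is not actually needed for that first invocation; invariance of $M$ alone gives $M((e_i\otimes\iota)\alpha(z))=M(z)$ for $z\in\LUC(\NN,\alpha)$.
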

\begin{proof}
(i)$\Rightarrow$(ii): A direct verification reveals that any w*-cluster point of the net $\net{\nu}$, presented in Proposition \ref{FP1}, is an $\alpha$-left invariant mean.\\
(ii)$\Rightarrow$ (i): Let  $N\in\NN^*$ be an $\alpha$-left invariant mean. There exists a net $\net{\nu}$ of states in $\NN_*$ such that $\nu_i\tosw N$ in $\NN^*$, therefore  $\alpha_*(\omega\otimes\nu_i)-\omega(1)\nu_i\tow 0$ in $\NN_*$, for all $\omega\in\M_*$.\\
(ii)$\Rightarrow$(iii) is trivial. Just restrict the $\alpha$-left invariant mean to  $\LUC(\NN,\alpha)$.\\
(iii)$\Rightarrow$(ii): Let $\net{\omega}$ be a bounded approximate identity for $\M_*$ consists of states and let $M_0$ be a left invariant mean on $\LUC(\NN,\alpha)$. Define $N:\NN\lto\C$ by
 $$N(y)=\lim_{i\in I}M_0((\omega_i\otimes\iota)\alpha(y))\fs\fs(y\in\NN).$$

It is immediate that $N$ is a state on $\NN$.  Since for each $\omega\in\M_*$,\   $\lim_i(\omega\otimes\omega_i)\Gamma=\lim_i(\omega_i\otimes\omega)\Gamma$, we have
\beq*
N((\omega\otimes\iota)\alpha(y))&=&\lim_{i\in I}M_0((\omega\otimes\omega_i\otimes\iota)(\iota\otimes\alpha)\alpha(y))
=\lim_{i\in I}M_0((\omega\otimes\omega_i\otimes\iota)(\Gamma\otimes\iota)\alpha(y))\\
&=&\lim_{i\in I}M_0((\omega_i\otimes\omega\otimes\iota)(\Gamma\otimes\iota)\alpha(y))
=\lim_{i\in I}M_0((\omega\otimes\iota)\alpha((\omega_i\otimes\iota)\alpha(y)))\\
&=&\omega(1)N(y).
\eeq*
Therefore $N$ is an $\alpha$-left invariant mean on $\NN$.
\end{proof}

\section{\bf Reiter's property $P_1$ for $(\NN,\alpha)$}\label{secp1}

We start this section with recalling
some of the basic definitions and properties of the von Neumann algebraic locally compact quantum groups that developed by Kustermans and Vaes in \cite{K-V, K-V_von}. We
begin with recalling some notions about normal semi-finite faithful (n.s.f.) weights on von Neumann algebras, \cite{Tak}.

Let $\M$ be a von Neumann algebra and let $\M^+$ denote its
 positive elements. For a weight $\varphi$ on $\M$ let
 $$\mathcal{M}^+_\varphi:=\{x\in\M^+ : \varphi(x)<\infty\} \fs{\rm and}\fs
 \N_\varphi:=\{x\in\M : x^*x\in\mathcal{M}^+_\varphi\}.$$

\begin{defn}
 A locally compact quantum group is a Hopf-von Neumann algebra $\HH=(\M,\Gamma)$
 such that:
 \bit
 \item there is a n.s.f. weight $\varphi$ on $\M$ which is left
 invariant, i.e.
 $$\varphi((\omega\otimes\iota)(\Gamma(x)))=\omega(1)\varphi(x)\qquad (\omega\in\M_*, x\in\mathcal{M}^+_\varphi),$$
\item there is a n.s.f. weight $\psi$ on $\M$ which is right
 invariant, i.e.
 $$\psi((\iota\otimes \omega)(\Gamma(x)))=\omega(1)\psi(x)\qquad (\omega\in\M_*, x\in\mathcal{M}^+_\psi).$$
 \eit
\end{defn}

For a locally compact group $G$, we have two locally compact quantum groups  $({\rm L}^\infty(G),\Gamma_a,\varphi_a,\psi_a)$ and $({\rm VN}(G),\Gamma_s,\varphi_s,\psi_s)$, in which $\varphi_a$ and $\psi_a$ are  the left and right Haar integrals, respectively, and  $\varphi_s=\psi_s$ is the Plancherel weight on ${\rm VN}(G)$, \cite[Definition VII.3.2]{Tak}.

Let $\M$ be in its standard form related to the GNS-construction $(H,\iota,\Lambda)$  for the left invariant n.s.f. weight $\varphi$, then there exists a unique unitary---the multiplicative unitary---$W\in B(H\otimes H)$ such that
$$W^*(\Lambda(x)\otimes\Lambda(y))=
(\Lambda\otimes\Lambda)((\Gamma(y))(x\otimes
1))\qquad (x,y\in \N_\varphi).$$
It satisfies $(\Gamma\otimes i)(W)=W_{13}W_{23}$ and $\Gamma(x)=W^*(1\otimes
x)W$\ ($x\in\M$), \cite[Theorem 1.2]{K-V_von}, \cite[p. 913]{K-V}.

Equivalently, the von Neumann algebraic quantum group $(\M,\Gamma,\varphi)$ has an underlying C$^*$-algebraic quantum group $(\A,\Gamma_c,\varphi_c)$ as discussed in \cite{K-V}, where
$$\A=\{(i\otimes\omega)(W);~~\omega\in \B(H)_*\}^{-^{\no{.}}}.$$
In which $\Gamma_c$ and $\varphi_c$ are the restriction of $\Gamma$ and $\varphi$
to $\A$ and $\A^+$, respectively, \cite[Proposition 1.7]{K-V_von}, \cite[Proposition A.2, A.5]{Van}. The dual space of $\A$ becomes a Banach algebra with the product $*_c$ given by $(f*_c g)(x)=(f\otimes g)\Gamma_c(x)$\  $(f,g\in \A^*,\ x\in \A)$. It canonically contains $\M_*$ as a closed ideal \cite[p. 193]{K-V}.

Similar to the group setting we shall use the following notation: the locally compact
quantum group $(\M,\Gamma,\varphi)$ is denoted by $\G$, and
we write $\LG$ for $\M$, $\LL$ for
$\M_*$, ${\rm L}^2(\G)$ for $H$, $\CL$ for $\A$ and $\MG$ for $\A^*$.

We shall apply notions such as left amenability and unitary left corepresentation  to locally compact quantum groups whenever they make sense for the underlying Hopf-von Neumann algebras and we write $\CR(\G)$ instead of $\CR(\HH)$. It should be mentioned that $U\in\CR(\G)$  on a Hilbert space $H_U$ is, in a sense, automatically continuous i.e. $U\in M(\CL\otimes\K(H_U))$, \cite[Theorem 1.6]{W_mult}.\\

Before we proceed for the definitions, let us describe our main aim with more details. Let $G$ be a locally compact group,  $p\in\{1,2\}$ and let $h\in {\rm L}^p(G)$. The mapping
$F[h]:G\lto {\rm L}^p(G)$ given by $F[h](s)=L_{s^{-1}}h\fs(s\in G)$
is bounded and continuous, where $(L_sh)(t)=h(st)\fs(t\in G)$. $G$ is said to  has Reiter's property $P_p$ if there is a net $\net{h}$ of non-negative, norm one functions in
 ${\rm L}^p(G)$ such that for each compact subset $K$ of $G$,
 $$\lim_i\sup_{x\in K}\no{F[h_i]-h_i}_p=0.$$ Moreover, these properties are equivalent to the amenability of $G$, \cite[Proposition 6.12]{Pier}.
Let $a\in {\rm C}_0(G)$ and define $F_a[h]:G\lto {\rm L}^p(G)$ by $F_a[h](s)=a(s)L_{s^{-1}}h \ (s\in G)$. Since $a\in {\rm C}_0(G)$ we have $F_a[h]\in {\rm C}_0(G,{\rm L}^p(G))\cong {\rm C}_0(G)\otimes^\lambda {\rm L}^p(G)$, where $\otimes^\lambda$ denotes the injective tensor product of Banach spaces. It is straightforward to verify that $G$ has Reiter's property $P_p$ if and only if there is a net $\net{h}$ of non-negative, norm one functions in
 ${\rm L}^p(G)$ such that for each $a\in {\rm C}_0(G)$,
$$\lim_i\no{F_a[h_i]-a\otimes h_i}_{\otimes^\lambda}=0.$$

Since ${\rm C}_0(G)$ is a minimal operator space, ${\rm C}_0(G)\otimes^\lambda {\rm L}^1(G)\cong {\rm C}_0(G)\inj {\rm L}^1(G)$ and
${\rm C}_0(G)\otimes^\lambda {\rm L}^2(G)\cong {\rm C}_0(G)\inj ({\rm L}^2(G))_c$, \cite[\S 8.2]{E-R}. Daws and Runde, \cite{R_ret}, by the canonical embedding ${\rm C}_0(G)\inj {\rm L}^1(G)$ and ${\rm C}_0(G)\inj ({\rm L}^2(G))_c$ into
$\CB({\rm L}^\infty(G),{\rm C}_0(G))$ and $\CB(({\rm L}^2(G))_c^*,{\rm C}_0(G))$, respectively, could extend Reiter's properties $P_1$ and $P_2$ from locally compact groups to locally compact quantum groups. Our motivation comes from the fact that ${\rm C}_0(G)\inj {\rm L}^1(G)$ and ${\rm C}_0(G)\inj ({\rm L}^2(G))_c$ can canonically embed  into
$\CB({\rm M}(G),{\rm L}^1(G))$ and $\CB({\rm M}(G),({\rm L}^2(G))_c)$, respectively.\\

Let $\alpha:\NN\lto \LG\vt\NN$ be a left action of a locally compact quantum group $\G$ on a von Neumann algebra $\NN$, and let  $a,b\in\CL$ and $\nu\in \NN_*$. We define
$$F^\alpha_{a,b}[\nu]:\MG\lto\NN^*,\fs F^\alpha_{a,b}[\nu](f)=\alpha^*(bfa\otimes\nu);\fs (f\in\MG).$$
It is immediate that $F^\alpha_{a,b}[\nu]$ is bounded.

\begin{prop}\label{Moa}
Let $\alpha$ be a left action of $\G$ on $\NN$,  $\nu\in\NN_*$, and let $a, b\in\CL$. Then $F^\alpha_{a,b}[\nu]$ lies in $\CB(\MG,\NN^*)$ and can be identified with an element of $\CL\inj \NN^*$.
\end{prop}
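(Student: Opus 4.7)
I would split the argument into (i) complete boundedness of $F^\alpha_{a,b}[\nu]$, and (ii) identification with an element of $\CL \inj \NN^*$.

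For (i), I would write $F^\alpha_{a,b}[\nu]$ as the composition
\[
\MG \longrightarrow \LL \longrightarrow \LL \proj \NN_* \longrightarrow \NN_* \hookrightarrow \NN^*,
\]
where the first arrow is $f \mapsto bfa$, the second is $g \mapsto g \otimes \nu$, and the third is $\alpha_*$. Each map is completely bounded: the first is the $\CL$-bimodule action on $\MG$, landing in $\LL$ because $\CL \cdot \MG \cdot \CL \subseteq \LL$ is a standard fact for locally compact quantum groups, with cb-norm at most $\no{a}\no{b}$; the second has cb-norm $\no{\nu}$; and $\alpha_*$ is a complete contraction as the predual of the normal $\ast$-homomorphism $\alpha$. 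Composing yields $F^\alpha_{a,b}[\nu]\in\CB(\MG,\NN^*)$.

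For (ii), I would use the canonical completely isometric embedding $\Psi \colon \CL \inj \NN^* \hookrightarrow \CB(\MG,\NN^*)$ that sends $x \otimes \phi$ to $f \mapsto f(x)\phi$; the image consists of those $T \in \CB(\MG,\NN^*)$ whose Banach-space transpose carries $\NN \subset \NN^{**}$ into $\CL \subset \CL^{**} = \MG^*$. Unpacking the definition of $F^\alpha_{a,b}[\nu]$,
\[
F^\alpha_{a,b}[\nu](f)(y) = \alpha_*(bfa \otimes \nu)(y) = (bfa)\bigl((\iota \otimes \nu)\alpha(y)\bigr) = f\bigl(a\,(\iota \otimes \nu)\alpha(y)\,b\bigr),
\]
so the transpose sends $y \in \NN$ to $S(y) := a\,(\iota \otimes \nu)\alpha(y)\,b \in \LG$. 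The identification with $\CL\inj\NN^*$ therefore reduces to verifying $S(\NN)\subseteq\CL$, for then $S\in\CB(\NN,\CL)$ and $F^\alpha_{a,b}[\nu]$ is the image under $\Psi$ of the corresponding element of $\CL\inj\NN^*$.

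The main obstacle is precisely this range statement. I would handle it by invoking the $C^*$-algebraic continuity of $\alpha$: using the unitary implementation $V \in \LG\vt\B(H_\NN)$ of the action (which lies in $M(\CL \otimes \K(H_\NN))$ and satisfies $\alpha(y) = V^*(1\otimes y)V$), one obtains $(a\otimes 1)\alpha(y)(b\otimes 1)\in \CL\otimes\NN$ for $a,b\in\CL$ and $y\in\NN$; slicing by $\nu\in\NN_*$ then delivers $S(y)\in\CL$. Once this is in hand, complete boundedness of $S\colon\NN\to\CL$ is immediate, and $F^\alpha_{a,b}[\nu]$ corresponds to a unique $\widetilde{S}\in\CL\inj\NN^*$ via $\Psi$, as claimed.
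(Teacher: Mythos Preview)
Your argument has a genuine gap in part (ii). The description of the image of $\Psi\colon \CL\inj\NN^*\hookrightarrow\CB(\MG,\NN^*)$ as the set of operators whose transpose carries $\NN$ into $\CL$ is not correct. By definition, $\CL\inj\NN^*$ is the cb-norm closure of the algebraic tensor product inside $\CB(\MG,\NN^*)$, i.e.\ the cb-approximable (``compact'') operators; the condition $T^*(\NN)\subseteq\CL$ only says that $T^*|_\NN\in\CB(\NN,\CL)$, and $\CB(\NN,\CL)$ is in general strictly larger than $\CL\inj\NN^*$. Consequently, verifying that your map $y\mapsto a\,(\iota\otimes\nu)\alpha(y)\,b$ lands in $\CL$ would not by itself place $F^\alpha_{a,b}[\nu]$ in the injective tensor product. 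There is a second, related problem: the claim $(a\otimes 1)\alpha(y)(b\otimes 1)\in\CL\otimes\NN$ is not justified. With the identity in the second leg, $(a\otimes 1)U_\alpha^*$ and $U_\alpha(b\otimes 1)$ lie only in $M(\CL\otimes\K(H_\NN))$, not in $\CL\otimes\K(H_\NN)$, so the product need not fall into any minimal $C^*$-tensor product where you could slice with $f\in\MG$.

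The paper repairs both points with one device: using that $\NN$ is in standard form on $H_\theta$, it picks compact operators $L,K\in\K(H_\theta)$ with $\nu(\cdot)=\nu(L\,\cdot\,K)$, so that $(a\otimes K)U_\alpha^*$ and $U_\alpha(b\otimes L)$ genuinely lie in $\CL\otimes\K(H_\theta)$ (not merely its multiplier algebra). These can then be approximated in norm by finite sums of elementary tensors, and the resulting map $y\mapsto(\iota\otimes\nu)\big((a\otimes K)\alpha(y)(b\otimes L)\big)$ is exhibited as a cb-norm limit of finite-rank operators $\NN\to\CL$. Taking adjoints, $F^\alpha_{a,b}[\nu]$ is a cb-limit of finite-rank operators in $\CB(\MG,\NN^*)$, which is precisely what membership in $\CL\inj\NN^*$ means. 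The missing idea in your approach is this finite-rank approximation, and the compacts in the second tensor leg are what make it possible.
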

\begin{proof}
Let $\theta$ be a n.s.f. weight on $\NN$, and let $(\Lambda_\theta,\iota,H_\theta)$ be the GNS-construction for it. Since $\NN$ is in standard form on $H_\theta$, there are  $L,S\in\K(H_\theta)$ such that $\nu(y)=\nu(LyS)$\  $(y\in\NN)$. For $f\in\MG$ and $y\in\NN$ we have
\beq*
(F^\alpha_{a,b}[\nu](f))(y)=(bfa\otimes\nu)\alpha(y)=(f\otimes\nu)((a\otimes S)\alpha(y)(b\otimes L)).
\eeq*
Let $U_\alpha\in \LG\vt \B(H_\theta)$ be the unitary corepresentation  such that $U_\alpha^*$ is
the unitary implementation, as defined in \cite[Definition 3.6]{V_imp}, of the left action $\alpha$. Since $\alpha=\alpha_{U_\alpha}$, the mapping
$$y\to(i\otimes\nu)((a\otimes S)\alpha(y)(b\otimes L)):\NN\to\CL$$
is cb-norm limit of a net of finite rank operators in $\CB(\NN,\CL)$.
By taking adjoint, $F^\alpha_{a,b}[\nu]$ is also cb-norm limit of a net of finite rank operators in $\CB(\MG,\NN^*)$.
Thus $F^\alpha_{a,b}[\nu]\in\CB(\MG,\NN^*)$ and can be identified with an element of $\CL\inj \NN^*$.
\end{proof}

\begin{defn}\label{P1}
Let $\alpha$ be a left action of $\G$ on $\NN$, we say that $(\NN,\alpha)$ has Reiter's property $P_1$ if there is a net $\net{\nu}$ of states in $\NN_*$ such that
$$\lim_i\cbno{F^\alpha_{a,b}[\nu_i]-ab\otimes\nu_i}=0$$
in $\CL\inj \NN^*$, for all $a,b\in\CL$.
\end{defn}

In the cases (i), (iii) and (iv) presented in Example \ref{exam}, we respectively have: $(\M,\Gamma)$ has Reiter's property $P_1$ if and only if $\G$ has Reiter's property $P_1$ in the sense of \cite[Definition 3.3 ]{R_ret}; in particular $G$ has Reiter's property $P_1$ if and only if $({\rm L}^\infty(G),\Gamma_a)$ has Reiter's property $P_1$. $(\B(H_\pi),\alpha_\pi)$ has Reiter's property $P_1$ if and only if the representation $\pi$ has Reiter's property $(P_1)_\pi$ in the sense of \cite[Definition 4.1]{Bek}; and $(\NN,\alpha_G)$ has Reiter's property $P_1$ if and only if the action $s\lto\beta_s$ satisfies in condition (iii) of \cite[Proposition 3.2]{L-P} or satisfies in the condition (2) of \cite[Proposition 1.13]{Stok}.

For the proof of  the main theorem of this section that covers \cite[Theorem 4.5]{R_ret}, \cite[Proposition 3.2]{L-P}, \cite[Theorem 4.3]{Bek} and \cite[Proposition 1.13]{Stok} we quote  the following technical lemma from \cite{R_ret}.
\begin{lem}\label{unif}
({\cite[Lemma 4.4]{R_ret}}) Let $E_0, E$ and F be operator spaces, and let $S\in\CB(E,E_0)$ lies in the cb-norm closure of the finite rank operators. Then for every norm bounded net $\net{T}$ in $\CB(E_0,F)$ that
converges to $T\in\CB(E_0,F)$ pointwise on $E_0$ we have
$$\lim_i\cbno{T_i\circ S}=\lim_i\sup_{n\in\mathbb{N}}\sup_{{\ \textbf f\ }\in\MX{E}_1}\|(T_i\circ S)^{(n)}({\textbf f\ })\|_n=0;$$
where $\MX{E}_1=\{ {\ \textbf f\ }\in\MX{E}\fs;\ \no{{\ \textbf f\ }}_n\leq 1 \}$.
\end{lem}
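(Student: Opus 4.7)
The plan is to exploit the density hypothesis on $S$ to reduce to the case where $S$ is itself a finite rank map, where the conclusion is elementary. First I would normalize the statement: by replacing $T_i$ with $T_i-T$, I may assume that $T=0$, so the net converges to $0$ pointwise on $E_0$ while remaining norm bounded, say $\sup_i\|T_i\|_{\CB(E_0,F)}\le K<\infty$. Note that since each term of the displayed equation is by definition the cb-norm of $T_i\circ S$ viewed as an element of $\CB(E,F)$, the only real content is that this cb-norm tends to $0$.

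Next I would fix $\varepsilon>0$ and choose, using the density hypothesis, a finite rank $S_0\in\CB(E,E_0)$ with $\no{S-S_0}_{\CB}<\varepsilon$. Then split
\[
T_i\circ S \;=\; T_i\circ S_0 \;+\; T_i\circ(S-S_0),
\]
so that $\cbno{T_i\circ S}\le \cbno{T_i\circ S_0}+K\varepsilon$. The second summand is uniformly small, so it suffices to show that $\cbno{T_i\circ S_0}\to 0$.

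For the finite rank piece, write $S_0$ as a finite sum $S_0(x)=\sum_{k=1}^{N}\phi_k(x)\,e_k$, with $\phi_k\in E^*$ and $e_k\in E_0$; via operator space duality this realizes $S_0$ as an element of the algebraic tensor product $E^*\otimes E_0\subset E^*\inj E_0$. Then
\[
T_i\circ S_0(x)\;=\;\sum_{k=1}^{N}\phi_k(x)\,T_i(e_k),
\]
and since for each $k$ the rank one map $x\mapsto \phi_k(x)\,T_i(e_k)$ has cb-norm $\no{\phi_k}_{\CB}\no{T_i(e_k)}$, we get
\[
\cbno{T_i\circ S_0}\;\le\;\sum_{k=1}^{N}\no{\phi_k}_{\CB}\,\no{T_i(e_k)}.
\]
Because $T_i\to 0$ pointwise on $E_0$, each term in this \emph{finite} sum tends to $0$, whence $\cbno{T_i\circ S_0}\to 0$. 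Combining with the previous step yields $\limsup_i\cbno{T_i\circ S}\le K\varepsilon$, and letting $\varepsilon\downarrow 0$ completes the argument.

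The main obstacle I anticipate is the transfer of pointwise convergence (a weak condition) into cb-norm convergence (a strong, ``uniform on all matrix amplifications'' condition). This is precisely why the finite rank reduction is essential: for a genuinely rank-one map $x\mapsto\phi(x)v$ the cb-norm factors as $\no{\phi}_{\CB}\,\no{v}$, so uniformity in $n$ and in the unit ball of $\MX{E}$ is automatic once one controls the single vector $T_i(v)\in F$. The norm boundedness of $\net{T}$ is what lets the approximation error in $S-S_0$ be absorbed independently of $i$, which is the other place where one could stumble without that hypothesis.
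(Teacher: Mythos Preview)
The paper does not supply its own proof of this lemma; it is quoted from \cite[Lemma~4.4]{R_ret} and used as a black box in the proofs of the two main theorems. Your argument is correct and is the standard one: approximate $S$ in cb-norm by a finite rank map, use the uniform bound $\sup_i\|T_i\|_{cb}\le K$ to absorb the approximation error uniformly in $i$, and then observe that on a finite rank map pointwise convergence forces cb-norm convergence because the cb-norm of a rank-one map $x\mapsto\phi(x)\,v$ factors as $\|\phi\|_{cb}\,\|v\|$. Your preliminary normalization (replacing $T_i$ by $T_i-T$) is also the right reading of the statement, since the conclusion $\lim_i\cbno{T_i\circ S}=0$ can only hold in general after that replacement; in both applications within the paper the limit $T$ is in fact zero.
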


\begin{thm}\label{thmp1}
Let $\alpha$ be a left action of $\G$ on $\NN$. Then the following assertions are equivalent:
\bit
\item[(i)] $(\NN,\alpha)$ is left amenable.
\item[(ii)] $(\NN,\alpha)$ has Reiter's property $P_1$.
\eit
\end{thm}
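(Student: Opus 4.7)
By Lemma \ref{amen}, left amenability of $(\NN,\alpha)$ is equivalent to the Reiter property $FP_1$ of Proposition \ref{FP1}: the existence of a net $\net{\nu}$ of states in $\NN_*$ with $\no{\alpha_*(\omega\otimes\nu_i)-\omega(1)\nu_i}\to 0$ for every $\omega\in\M_*$. So my plan is to prove $FP_1\Longleftrightarrow P_1$.

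For $(ii)\Rightarrow(i)$, suppose $\net{\nu}$ satisfies Reiter's $P_1$ and fix $\omega\in\M_*$ and $\epsilon>0$. Using a bounded approximate identity of positive contractions in $\CL$ (which acts non-degenerately on $\LTO$ via the GNS representation), one can choose $a,b\in\CL$ with $\no{b\omega a-\omega}<\epsilon$ and $|\omega(ab)-\omega(1)|<\epsilon$. The isometric embedding $\CL\inj\NN^*\hookrightarrow\CB(\MG,\NN^*)$ applied to the $P_1$ bound, evaluated at $\omega\in\M_*\subseteq\MG$, gives $\no{\alpha_*(b\omega a\otimes\nu_i)-\omega(ab)\nu_i}\to 0$. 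The triangle inequality then produces
$$\limsup_i\no{\alpha_*(\omega\otimes\nu_i)-\omega(1)\nu_i}\leq 2\epsilon,$$
and $FP_1$ follows on letting $\epsilon\downarrow 0$.

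For $(i)\Rightarrow(ii)$, let $\net{\nu}$ be the $FP_1$-net from Lemma \ref{amen}, fix $a,b\in\CL$, and define
$$S:\MG\to\M_*,\fs S(f)=bfa,\fs\fs T_i:\M_*\to\NN^*,\fs T_i(\omega)=\alpha_*(\omega\otimes\nu_i)-\omega(1)\nu_i.$$
A direct computation using $(bfa)(1)=f(ab)$ shows $F^\alpha_{a,b}[\nu_i]-ab\otimes\nu_i=T_i\circ S$ as elements of $\CB(\MG,\NN^*)$. The family $\net{T}$ is uniformly bounded and, by $FP_1$, converges pointwise to $0$ on $\M_*$; so, provided $S$ lies in the cb-norm closure of the finite-rank completely bounded operators from $\MG$ to $\M_*$, Lemma \ref{unif} yields $\cbno{T_i\circ S}\to 0$, which is precisely Reiter's $P_1$.

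The main obstacle is therefore the cb-approximability of $S$. I would handle it in the spirit of Proposition \ref{Moa}: writing $a$ and $b$ as norm limits of slices $(\iota\otimes\mu)(W)$ of the multiplicative unitary, the map $S$ becomes a cb-norm limit of finite rank maps in $\CB(\MG,\M_*)$; equivalently, one may work with the normal CB predual map $T:\LG\to\CL$ given by $T(x)=axb$ (well defined by the Podle\'s-type inclusion $\CL\LG\CL\subseteq\CL$) and approximate it by finite rank maps via the same corepresentation decomposition.
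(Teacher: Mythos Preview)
Your argument for $(ii)\Rightarrow(i)$ is essentially correct; the paper obtains the same conclusion more directly by invoking Cohen's factorization theorem to write $\omega=b\omega'a$ exactly, which removes the $\epsilon$-bookkeeping, but your approximate-identity version is fine.

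The implication $(i)\Rightarrow(ii)$, however, has a real gap. Your map $S:\MG\to\M_*$, $S(f)=bfa$, is \emph{not} well defined in general: for $a,b\in\CL$ and $f\in\MG$ the functional $bfa$ lies in $\MG=\CL^*$, but there is no reason for it to be normal on $\LG$, i.e.\ to lie in $\LL$. Equivalently, the ``Podle\'s-type inclusion'' $\CL\,\LG\,\CL\subseteq\CL$ you invoke is simply false. Take $\G=\mathbb{T}$ (the dual of $\mathbb{Z}$): then $\CL=C(\mathbb{T})$ is unital, so with $a=b=1$ one would need $L^\infty(\mathbb{T})\subseteq C(\mathbb{T})$, which fails; correspondingly $S(f)=f$ does not send $M(\mathbb{T})$ into $L^1(\mathbb{T})$. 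Thus neither your $T_i\circ S$ factorization nor the proposed finite-rank approximation of $S$ in $\CB(\MG,\M_*)$ can get off the ground.

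The paper circumvents exactly this obstruction by a convolution trick. One fixes a state $\omega_0\in\LL$, replaces the $FP_1$-net $\{\nu'_i\}$ by $\nu_i=\alpha_*(\omega_0\otimes\nu'_i)$, and takes
\[
S:=F^{\Gamma}_{a,b}[\omega_0]:\MG\to\LL,\qquad S(f)=bfa*_c\omega_0.
\]
Because $\LL$ is a closed two-sided \emph{ideal} of $\MG$ under $*_c$, this $S$ genuinely lands in $\LL$; and Proposition~\ref{Moa}, applied to the action $\Gamma$, already gives that $S$ lies in the cb-closure of the finite-rank maps. A short computation using $(\iota\otimes\alpha)\alpha=(\Gamma\otimes\iota)\alpha$ then shows $F^\alpha_{a,b}[\nu_i](f)=\alpha_*(S(f)\otimes\nu'_i)$, so that $F^\alpha_{a,b}[\nu_i]-ab\otimes\nu'_i=T_i\circ S$, and Lemma~\ref{unif} applies. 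The passage from $\nu'_i$ to $\nu_i$ costs only $\no{\nu_i-\nu'_i}\to 0$. The insertion of $\omega_0$ is the missing idea in your attempt.
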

\begin{proof}
(i)$\Rightarrow$(ii): Let $a,b\in\CL$ and $\omega_0\in\LL$ be an arbitrary state. By Lemma \ref{amen}, there exists a net $\net{\nu'}_{i\in I}$ of states in $\NN_*$ such that $\lim_i\no{\alpha_*(\omega\otimes\nu'_i)-\omega(1)\nu'_i}=0$,  for all $\omega\in\LL$. For $i\in I$ set $\nu_i=\alpha_*(\omega_0\otimes\nu'_i)$ and define $T_i:\LL\to \NN_*$  by
\beq*
T_i(\omega)=\alpha_*(\omega\otimes\nu'_i)-\omega(1)\nu'_i.
\eeq*
Thus $\lim_i\no{\nu_i-\nu'_i}=0$ and the net $\net{T}$, which lies in $\CB(\LL,\NN_*)$, is norm bounded and also it converges to $0$ pointwise on $\LL$.

By Proposition \ref{Moa}, $S:=F^{\Gamma}_{a,b}[\omega_0]\in\CB(\MG,\LL)$  belongs to the cb-norm closure of the finite rank operators. For $f\in\MG$ we have
\beq*
F^\alpha_{a,b}[\nu_i](f)&=&\alpha_*(bfa\otimes\alpha_*(\omega_0\otimes\nu'_i))
=(bfa\otimes\omega_0\otimes\nu'_i)(i\otimes\alpha)\alpha\\
&=&(bfa\otimes\omega_0\otimes\nu'_i)(\Gamma\otimes i)\alpha
=(bfa\ast_c\omega_0\otimes\nu'_i)\alpha\\
&=&\alpha_*(S(f)\otimes\nu'_i).
\eeq*
So, by Lemma \ref{unif}, we have,
\beq*
\lim_i\cbno{F^\alpha_{a,b}[\nu_i]-ab\otimes\nu_i}
&\leq&\lim_i\Big{(}\cbno{T_i\circ S}
+\cbno{ab\otimes\nu_i-ab\otimes\nu'_i}\Big{)}=0.
\eeq*
This proves that (ii) holds.\\
(ii) $\Rightarrow$ (i): Let $\net{\nu}$ be a net satisfying  Definition \ref{P1}, and let $\omega\in\LL$. By the Cohen's factorization theorem, \cite[Corollary 2.9.26]{Dal}, there are $a, b\in\CL$ and $\omega'\in\LL$ such that $\omega=b\omega' a$. We then have:
\beq*
\lim_i\no{\alpha_*(\omega\otimes\nu_i)-\omega(1)\nu_i}&=&\lim_i\no{F^\alpha_{a,b}[\nu_i](\omega')-\omega'(ab)\nu_i}\\
&\leq&\no{\omega'}\lim_i\cbno{F^\alpha_{a,b}[\nu_i]-ab\otimes\nu_i}=0.
\eeq*
This together with Lemma \ref{amen} imply that (i) holds.
\end{proof}

Apply Theorem \ref{thmp1} together with Lemma \ref{amen} for the action presented in Example \ref{exam} (ii) we have the next result, which provides some equivalences for the left amenability of $U\in\CR(\G)$, as \cite[Defintion 4.1]{B-T}. Recall that locally compact quantum group $\G$ is called co-amenable if the Banach
algebra $\LL$ has a bounded approximate identity.
\begin{cor}
For a locally compact quantum group $\G$, consider the following assertions:
\bit
\item[(i)] $U\in\CR(\G)$ is left amenable.
\item[(ii)] $(\B(H_U),\alpha_U)$ has the Reiter's property $FP_1$.
\item[(iii)] $(\B(H_U),\alpha_U)$ has the Reiter's property $P_1$.
\item[(iv)] There is a left invariant mean on $\LUC(\B(H_U),\alpha_U)$.
\eit
Then (i) $\Leftrightarrow$ (ii) $\Leftrightarrow$ (iii) $\Rightarrow$ (iv) and these are equivalent in the case where $\G$ is co-amenable.
\end{cor}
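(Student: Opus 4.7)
My plan is to derive this corollary almost entirely by collecting the tools already in place, with the only subtle point being the role of co-amenability in closing the loop.

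By Example \ref{exam}(ii), left amenability of $U\in\CR(\G)$ in the sense of \cite{B-T} is, by definition, the same as left amenability of the pair $(\B(H_U),\alpha_U)$ in the sense of Definition \ref{leftact}. Combining the equivalence (i)$\Leftrightarrow$(ii) of Lemma \ref{amen} with Theorem \ref{thmp1} applied to the left action $\alpha_U$ yields (i)$\Leftrightarrow$(ii)$\Leftrightarrow$(iii) in one stroke. The implication (iii)$\Rightarrow$(iv) is then the content of (ii)$\Rightarrow$(iii) of Lemma \ref{amen}: restrict any $\alpha_U$-left invariant mean on $\B(H_U)$ to the unital, self-adjoint subspace $\LUC(\B(H_U),\alpha_U)$.

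The only step that requires attention is the converse (iv)$\Rightarrow$(i) under the hypothesis that $\G$ is co-amenable. My plan is to invoke the implication (iii)$\Rightarrow$(ii) of Lemma \ref{amen}, which is, however, stated only for Hopf-von Neumann algebras whose predual admits a bounded approximate identity \emph{consisting of states}. Co-amenability only produces a bounded approximate identity in $\LL$ a priori, so the real work is to upgrade this BAI to one built from states. This is a standard fact about co-amenable locally compact quantum groups: any bounded approximate identity in $\LL$ can be replaced, via a convex-combination/normalization argument, by one formed of states of $\LL$. With such a BAI of states in hand, the proof of (iii)$\Rightarrow$(ii) of Lemma \ref{amen} applies verbatim to the action $\alpha_U$ and produces an $\alpha_U$-left invariant mean on $\B(H_U)$, which, through Example \ref{exam}(ii), is precisely the left amenability of $U$.

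I expect the upgrade-to-states step to be the only point at which a reader might pause; everything else is bookkeeping that reduces the statement to Example \ref{exam}(ii), Lemma \ref{amen} and Theorem \ref{thmp1}.
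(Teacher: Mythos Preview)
Your proposal is correct and follows essentially the same route as the paper, which simply cites Example~\ref{exam}(ii), Lemma~\ref{amen} and Theorem~\ref{thmp1} without further elaboration. You are in fact more careful than the paper on one point: you explicitly flag that Lemma~\ref{amen}(iii)$\Rightarrow$(ii) needs a bounded approximate identity in $\LL$ consisting of \emph{states}, and that co-amenability, as defined here, only guarantees a BAI; the upgrade to states is indeed a standard equivalence for co-amenable locally compact quantum groups (see, e.g., \cite{B-T}), and the paper uses it tacitly.
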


\section{\bf Reiter's Property $P_2$ for $(\NN,\alpha)$}\label{secp2}

Let $\alpha:\NN\lto \LG\vt\NN$ be a left action of a locally compact quantum group $\G$ on $\NN$. Fix a n.s.f. weight $\theta$ on $\NN$ with the corresponding GNS-construction $(\Lambda_\theta,\iota,H_\theta)$. Let $U_\alpha\in \CR(\G)$,  acting on $H_\theta$, be such that $U_\alpha^*$ is the unitary implementation, as defined in \cite[Definition 3.6]{V_imp}, of the left action $\alpha$; in other words,  $\alpha=\alpha_{U_\alpha}$ on $\NN$. Let $\xi\in H_\theta$ and $a,b\in\CL$ and define
$$F^\alpha_{a,b}[\xi]:\MG\to H_\theta\fs\mbox{by}\fs F^\alpha_{a,b}[\xi](f)=(bfa\otimes i)(U_\alpha)\xi .$$
Then we have,

\begin{lem}
Let $\alpha$ be a left action of $\G$ on $\NN$,  $\xi\in H_\theta$, and let $a,b\in\CL$. Then $F^\alpha_{a,b}[\xi]$ lies in $\CB(\MG,(H_\theta)_c)$ and can be identified with an element of $\CL\inj(H_\theta)_c$.
\end{lem}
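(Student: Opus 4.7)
The plan is to mimic the proof of Proposition \ref{Moa}, encoding the vector $\xi$ as a rank-one operator instead of a normal functional.

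First, using $\alpha = \alpha_{U_\alpha}$ and the definition of $bfa$, I would rewrite
\[
F^\alpha_{a,b}[\xi](f) \;=\; (bfa \otimes \iota)(U_\alpha)\xi \;=\; (f \otimes \iota)\bigl((a \otimes 1)\, U_\alpha\, (b \otimes 1)\bigr)\xi \qquad (f \in \MG).
\]
Introduce the rank-one map $L_\xi \colon \C \to H_\theta$, $c \mapsto c\xi$; under the standard identification $(H_\theta)_c \cong \K(\C, H_\theta)$ of the column operator space, $L_\xi$ corresponds to $\xi$. This recasts the formula as
\[
F^\alpha_{a,b}[\xi](f) \;=\; (f \otimes \iota)(V_\xi), \qquad V_\xi := (a \otimes 1)\, U_\alpha\, (b \otimes L_\xi),
\]
so that the first-leg slice now lands in $\K(\C, H_\theta) \cong (H_\theta)_c$.

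The heart of the proof is to show that $V_\xi \in \CL \inj (H_\theta)_c$. Invoking \cite[Theorem 1.6]{W_mult}, $U_\alpha \in M(\CL \otimes \K(H_\theta))$; via the standard identifications $\CL \otimes \K(H_\theta) \cong \K(\CL \otimes H_\theta)$ and $M(\CL \otimes \K(H_\theta)) \cong \mathcal{L}(\CL \otimes H_\theta)$, $U_\alpha$ acts as an adjointable operator on the Hilbert $\CL$-module $\CL \otimes H_\theta$. Since $b \otimes \xi \in \CL \otimes H_\theta$, one gets $U_\alpha(b \otimes \xi) \in \CL \otimes H_\theta$, and left multiplication by $a \in \CL$ preserves the module, so $V_\xi \in \CL \otimes H_\theta \cong \CL \inj (H_\theta)_c$ under the canonical operator-space identification.

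Finally, the canonical embedding $\CL \inj (H_\theta)_c \hookrightarrow \CB(\MG, (H_\theta)_c)$, given by first-leg slicing $g \mapsto \bigl(f \mapsto (f \otimes \iota)(g)\bigr)$, sends $V_\xi$ to precisely $F^\alpha_{a,b}[\xi]$. This yields both $F^\alpha_{a,b}[\xi] \in \CB(\MG, (H_\theta)_c)$ and its identification with an element of $\CL \inj (H_\theta)_c$. The main obstacle is the containment $V_\xi \in \CL \inj (H_\theta)_c$ rather than only in the ambient multiplier algebra $M(\CL \otimes \K(H_\theta))$; the Hilbert-module viewpoint on $U_\alpha$ is what secures this, playing the role that the ideal property $M(A) \cdot A \subseteq A$ played in Proposition \ref{Moa}. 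An alternative but more calculation-heavy route would approximate $V_\xi$ by sums of elementary tensors through a bounded approximate identity of $\K(H_\theta)$ and then express $F^\alpha_{a,b}[\xi]$ as a cb-norm limit of finite rank operators, but the module perspective is considerably cleaner.
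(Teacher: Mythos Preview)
Your argument is correct and essentially the same as the paper's: both exploit $U_\alpha \in M(\CL \otimes \K(H_\theta))$ and multiply by appropriate elementary tensors to produce an element of $\CL \inj (H_\theta)_c$ whose first-leg slice is $F^\alpha_{a,b}[\xi]$. The only difference is packaging---the paper chooses $L \in \K(H_\theta)$ with $L\xi = \xi$, notes $(a \otimes 1)U_\alpha(b \otimes L) \in \CL \inj \K(H_\theta)$ by the multiplier--ideal property, and then applies the completely bounded map $M_\xi \colon S \mapsto S\xi$, whereas you phrase the same step via the Hilbert-module identification $M(\CL \otimes \K(H_\theta)) \cong \mathcal{L}(\CL \otimes H_\theta)$ acting directly on $b \otimes \xi$.
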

\begin{proof}
Let $L\in \K(H_\theta)$ be such that $L\xi=\xi$. Since $U_\alpha\in M(\CL\inj\K(H_\theta))$ we have
$$(i\otimes M_\xi)((a\otimes1)U_\alpha(b\otimes L))\in\CL\inj (H_\theta)_c,$$
where $M_\xi: \K(H_\theta)\to(H_\theta)_c$ is the completely bounded map given by $M_\xi(S)=S\xi$  $(S\in \K(H_\theta))$. From the canonical embedding  $\CL\inj(H_\theta)_c$ into $\CB(\MG,(H_\theta)_c)$, we have
\beq*
(f\otimes M_\xi)((a\otimes1)U_\alpha(b\otimes L))=F^\alpha_{a,b}[\xi](f).
\eeq*
In other words, $F^\alpha_{a,b}[\xi]\in\CB(\MG,(H_\theta)_c)$ and it can be identified with an element of $\CL \inj(H_\theta)_c$.
\end{proof}

Now we  define the property $P_2$ for a left action of a locally compact quantum group on
a von Neumann algebra; see also \cite[Definition 5.2]{R_ret}.
\begin{defn}\label{P2}
Let $\alpha$ be a left action of  $\G$ on $\NN$, we say that $(\NN,\alpha)$ has Reiter's property $P_2$ if there is a net $\net{\xi}$ of unit vectors in $H_\theta$ such that for every $a,b\in\CL$
$$\lim_i\cbno{F^\alpha_{a,b}[\xi_i]-ab\otimes\xi_i}=0$$
in $\CL\inj(H_\theta)_c$.
\end{defn}

It is obvious that $(\LG,\Gamma)$ has Reiter's property $P_2$ if and only if $\G$ has Reiter's property $P_2$ in the sense of \cite[Definition 5.2 ]{R_ret}.

Recall that $U\in\CR(\G)$ has the weak containment property (WCP) if there exists a net $\net{\xi}_{i\in I}$ of unit vectors in $H_U$ such that
$$\lim_i\no{U(\eta\otimes\xi'_i)-\eta\otimes\xi'_i}=0\fs\fs(\eta\in\LTO);$$
see \cite[\S 5]{B-T} for details.

Now we can prove the main result of this section that covers \cite[Theorem 5.4]{R_ret}.
\begin{thm}\label{thmp2}
Let $\alpha$ be a left action of $\G$ on $\NN$. Then the following assertions are equivalent:
\bit
\item[(i)] $U_\alpha\in\CR(\G)$ has the WCP.
\item[(ii)] $(\NN,\alpha)$ has Reiter's property $P_2$.
\eit
\end{thm}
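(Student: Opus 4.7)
The plan is to mirror the structure of Theorem~\ref{thmp1}, swapping states in $\NN_*$ for unit vectors in $H_\theta$ and $\alpha_*$-slices for $U_\alpha$-slices, with the corepresentation identity $(\Gamma \otimes \iota)(U_\alpha) = (U_\alpha)_{13}(U_\alpha)_{23}$ playing the role of coassociativity of $\alpha$.

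\textbf{(i) $\Rightarrow$ (ii).} Given a WCP-net $\{\xi_i'\}$ of unit vectors, I would first upgrade WCP to the slice-level statement
$$\lim_i \no{(\omega \otimes \iota)(U_\alpha)\xi_i' - \omega(1)\xi_i'} = 0 \qquad (\omega \in \LL).$$
For vector functionals $\omega = \omega_{\eta,\eta'}$ this is immediate from the duality
$$\langle (\omega_{\eta,\eta'} \otimes \iota)(U_\alpha)\xi_i' - \omega_{\eta,\eta'}(1)\xi_i',\, \zeta\rangle = \langle U_\alpha(\eta \otimes \xi_i') - \eta \otimes \xi_i',\, \eta' \otimes \zeta\rangle,$$
and such functionals are norm-dense in $\LL$ via the standard-form realization of $\LG$ on $\LTO$. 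Next, fix a state $\omega_0 \in \LL$ and introduce the net $T_i:\LL \to (H_\theta)_c$ given by $T_i(\omega) = (\omega \otimes \iota)(U_\alpha)\xi_i' - \omega(1)\xi_i'$. It factors through the cb slice $\LL \to \B(H_\theta)$ against $U_\alpha$ followed by the completely contractive evaluation $\B(H_\theta) \to (H_\theta)_c$ at $\xi_i'$, so $\{T_i\}$ is uniformly cb-bounded and, by the previous step, converges pointwise to $0$. Now set $\xi_i = (\omega_0 \otimes \iota)(U_\alpha)\xi_i'$; the corepresentation identity yields, for $a, b \in \CL$ and $f \in \MG$,
\begin{align*}
F^\alpha_{a,b}[\xi_i](f) &= (bfa \otimes \iota)(U_\alpha)\,(\omega_0 \otimes \iota)(U_\alpha)\xi_i' \\
&= \big((bfa *_c \omega_0) \otimes \iota\big)(U_\alpha)\xi_i' = (S(f) \otimes \iota)(U_\alpha)\xi_i',
\end{align*}
where $S := F^\Gamma_{a,b}[\omega_0] \in \CB(\MG, \LL)$ lies in the cb-closure of finite-rank operators by Proposition~\ref{Moa}. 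Consequently
$$F^\alpha_{a,b}[\xi_i] - ab \otimes \xi_i = T_i \circ S + ab \otimes (\xi_i' - \xi_i),$$
so Lemma~\ref{unif} forces $\cbno{T_i \circ S} \to 0$, while $\no{\xi_i - \xi_i'} = \no{T_i(\omega_0)} \to 0$ directly. Since $\no{\xi_i} \to 1$, passing to the normalized vectors $\xi_i/\no{\xi_i}$ for large $i$ produces the required $P_2$-net.

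\textbf{(ii) $\Rightarrow$ (i).} Given a $P_2$-net $\{\xi_i\}$, for each $\omega \in \LL$ I would apply Cohen's factorization in $\LL$ to write $\omega = b \omega' a$ with $a, b \in \CL$ and $\omega' \in \LL$, whence
$$\no{(\omega \otimes \iota)(U_\alpha)\xi_i - \omega(1)\xi_i} = \no{F^\alpha_{a,b}[\xi_i](\omega') - \omega'(ab)\xi_i} \leq \no{\omega'}\,\cbno{F^\alpha_{a,b}[\xi_i] - ab \otimes \xi_i} \lto 0.$$
Specializing to $\omega = \omega_{\eta,\eta}$ for a unit vector $\eta \in \LTO$, unitarity of $U_\alpha$ and Cauchy--Schwarz yield
$$\no{U_\alpha(\eta \otimes \xi_i) - \eta \otimes \xi_i}^2 = 2\,\mathrm{Re}\,\langle \xi_i - (\omega_{\eta,\eta} \otimes \iota)(U_\alpha)\xi_i,\, \xi_i\rangle \leq 2\no{(\omega_{\eta,\eta} \otimes \iota)(U_\alpha)\xi_i - \xi_i} \lto 0,$$
and scaling extends this to arbitrary $\eta \in \LTO$, establishing WCP.

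The most delicate point I anticipate is certifying that $T_i$ sits in $\CB(\LL,(H_\theta)_c)$ with a cb-bound independent of $i$, which is what is actually required to feed Lemma~\ref{unif}; this amounts to tracking the column operator-space structure on the range through the factorization of $T_i$ and carefully amplifying the slice map $\omega \mapsto (\omega \otimes \iota)(U_\alpha)$. Once this bookkeeping is in place, the remaining moves---the slice-and-substitute computation powered by the corepresentation identity, and Cohen factorization reducing $\MG$ to $\LL$---are direct transcriptions of the corresponding steps in Theorem~\ref{thmp1}.
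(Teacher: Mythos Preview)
Your proposal is correct and follows essentially the same route as the paper's proof: the same auxiliary net $\xi_i=(\omega_0\otimes\iota)(U_\alpha)\xi_i'$, the same operators $T_i$ and $S=F^{\Gamma}_{a,b}[\omega_0]$, the same use of the corepresentation identity to reduce $F^\alpha_{a,b}[\xi_i]$ to $T_i\circ S$ plus a remainder, and the same appeal to Lemma~\ref{unif} and Cohen factorization. Your extra care with the equivalence between the vector and slice formulations of WCP, the cb-boundedness of $T_i$, and the normalization of the $\xi_i$ simply makes explicit what the paper leaves implicit.
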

\begin{proof}
(i) $\Rightarrow$ (ii): Let $a,b\in\CL$ and let $\omega_0\in\LL$ be an arbitrary state. Let $\net{\xi'}_{i\in I}$ be a net of unit vectors in $H_\theta$ such that
$$\lim_i\no{U_\alpha(\eta\otimes\xi'_i)-\eta\otimes\xi'_i}=0\fs\fs(\eta\in\LTO),$$
or equivalently, $\lim_i\no{(\omega\otimes i)(U_\alpha)\xi'_i-\omega(1)\xi'_i}=0$, for all $\omega\in\LL$. For $i\in I$ set $\xi_i=(\omega_0\otimes i)(U_\alpha)\xi'_i$ and define $T_i:\LL\to H_\theta$ by
\beq*
T_i(\omega)=(\omega\otimes i)(U_\alpha)\xi'_i-\omega(1)\xi'_i.
\eeq*
Then $\lim_i\no{\xi_i-\xi'_i}=0$ and the net $\net{T}$, which lies in $\CB(\LL,(H_\theta)_c)$, is norm bounded and also it  converges to $0$ pointwise on $\LL$. Let $S$ be defined as in the proof of Theorem \ref{thmp1},  then for  $f\in\MG$,
\beq*
F^\alpha_{a,b}[\xi_i](f)&=&(bfa\otimes i)(\omega_0\otimes i)(U_\alpha)\xi'_i
=(bfa\otimes\omega_0\otimes i)((U_\alpha)_{13}(U_\alpha)_{23})\xi'_i\\
&=&(bfa\otimes\omega_0\otimes i)((\Gamma\otimes i)(U_\alpha))\xi'_i
=(bfa*_c\omega_0\otimes i)(U_\alpha)\xi'_i\\
&=&(S(f)\otimes i)(U_\alpha)\xi'_i.
\eeq*
Thus, by Lemma \ref{unif}, we have
\beq*
\lim_i\cbno{F^\alpha_{a,b}[\xi_i]-ab\otimes\xi_i}
&\leq&\lim_i\Big{(}\cbno{T_i\circ S}
+\cbno{ab\otimes\xi'_i-ab\otimes\xi_i}\Big{)}=0.
\eeq*
(ii) $\Rightarrow$ (i): Let $\net{\xi}$ be a net satisfying Definition (\ref{P2}), and let $\omega\in\LL$. By the Cohen's
factorization theorem, \cite[Corollary 2.9.26]{Dal}, there are $a, b\in\CL$ and $\omega'\in\LL$ such that $\omega=b\omega' a$. Thus
\beq*
\lim_i\no{(\omega\otimes i)(U_\alpha)\xi_i-\omega(1)\xi_i}&=&\lim_i\no{F^\alpha_{a,b}[\xi_i](\omega')-\omega'(ab)\xi_i}\\
&\leq&\no{\omega'}\lim_i\cbno{F^\alpha_{a,b}[\xi_i]-ab\otimes\xi_i}=0;
\eeq*
and this completes the proof.
\end{proof}

Applying Theorem \ref{thmp2} for the action presented in Example \ref{exam} (ii) we  obtain the next result which provides an equivalence for the WCP of $U\in\CR(\G)$; see \cite[Theorem 5.1]{B-T}.
\begin{cor}
For a locally compact quantum group $\G$ the following  assertions are equivalent:
\bit
\item[(i)] $U\in\CR(\G)$ has the WCP.
\item[(ii)] $(\B(H_U),\alpha_{U})$ has Reiter's property $P_2$.
\eit
\end{cor}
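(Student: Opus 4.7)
The proof plan is to specialize Theorem~\ref{thmp2} to the action $\alpha=\alpha_U$ of $\G$ on $\NN=\B(H_U)$ from Example~\ref{exam}(ii), and then to reconcile the unitary implementation $U_{\alpha_U}$ produced by that theorem with the original corepresentation $U$. Theorem~\ref{thmp2} yields at once that $(\B(H_U),\alpha_U)$ has Reiter's property $P_2$ if and only if $U_{\alpha_U}\in\CR(\G)$ has the WCP, so the whole corollary reduces to showing that $U_{\alpha_U}$ has the WCP precisely when $U$ does.

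For this identification I would take the n.s.f.\ weight $\theta$ on $\B(H_U)$ to be the canonical trace, so that the GNS Hilbert space identifies with $H_\theta\cong H_U\otimes\bar H_U$, with $\B(H_U)$ acting by left multiplication on the first leg. Under this identification the defining formula $\alpha_U(x)=U^*(1\otimes x)U$ rewrites as $\alpha_U(x)=U_{12}^*(1\otimes x\otimes 1)U_{12}$, and the uniqueness clause of the unitary implementation from \cite[Definition 3.6]{V_imp} forces $U_{\alpha_U}$ to coincide with $U_{12}$, up to a unitary equivalence implemented from $1\vt\B(H_\theta)$ that preserves the WCP. In other words, $U_{\alpha_U}$ is essentially $U$ amplified by the trivial corepresentation on $\bar H_U$.

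The equivalence of the WCP for $U$ and for $U_{12}$ is then routine. Given a net $\net{\xi}\subset H_U$ of unit vectors witnessing the WCP for $U$, fix any unit vector $\bar\eta_0\in\bar H_U$ and set $\zeta_i:=\xi_i\otimes\bar\eta_0$; a direct computation shows $U_{12}(\eta\otimes\zeta_i)-\eta\otimes\zeta_i=(U(\eta\otimes\xi_i)-\eta\otimes\xi_i)\otimes\bar\eta_0$, whose norm tends to zero for each $\eta\in\LTO$. Conversely, from a net $\net{\zeta}\subset H_U\otimes\bar H_U$ of unit vectors witnessing the WCP for $U_{12}$, the Schmidt decomposition $\zeta_i=\sum_n\sigma_n^{(i)}\,\xi_n^{(i)}\otimes\bar\eta_n^{(i)}$ together with the orthogonality of the $\bar\eta_n^{(i)}$ yields $\sum_n|\sigma_n^{(i)}|^2\,\no{U(\eta\otimes\xi_n^{(i)})-\eta\otimes\xi_n^{(i)}}^2\to 0$, and a standard diagonal extraction then produces a net of unit vectors in $H_U$ witnessing the WCP for $U$. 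The only real obstacle is the identification step: translating Vaes' modular-theoretic construction of the unitary implementation into the concrete form $U_{12}$ requires some care, but the uniqueness clause makes this bookkeeping rather than a conceptual difficulty, and once it is done the corollary follows.
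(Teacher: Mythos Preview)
Your proposal follows exactly the paper's route: the paper simply says ``Applying Theorem~\ref{thmp2} for the action presented in Example~\ref{exam}(ii)'' and gives no further argument, so in that sense you are doing strictly more work than the authors. You are right that a literal application of Theorem~\ref{thmp2} yields the equivalence with the WCP of $U_{\alpha_U}$ on $H_\theta$, not of $U$ on $H_U$, and the paper silently identifies the two.

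The one point that needs sharpening is your appeal to ``the uniqueness clause'' of \cite[Definition~3.6]{V_imp}. Uniqueness there means that the Vaes implementation is uniquely determined by $\alpha$ and the chosen standard form; it does \emph{not} say that any corepresentation $V$ with $V^*(1\otimes x)V=\alpha(x)$ must equal the Vaes implementation. In particular $U_{12}$ implements $\alpha_U$ but acts trivially on the commutant $1\otimes\B(\bar H_U)$, whereas the Vaes implementation is built to interact with the modular conjugation and in general acts nontrivially there; so the identification $U_{\alpha_U}=U_{12}$ (even up to a unitary from $1\otimes\B(H_\theta)$) is not automatic. Two clean fixes: either observe that the proof of Theorem~\ref{thmp2} uses nothing about $U_\alpha$ beyond the corepresentation identity $(\Gamma\otimes\iota)(U_\alpha)=(U_\alpha)_{13}(U_\alpha)_{23}$ and $U_\alpha\in M(\CL\otimes\K(H))$, so one may rerun it verbatim with $U$ on $H_U$ in place of $U_\alpha$ on $H_\theta$; or else compute the Vaes implementation explicitly for the tracial weight on $\B(H_U)$ (it comes out as $U_{12}\,\bar U_{13}$ on $\LTO\otimes H_U\otimes\bar H_U$, cf.\ the $V\times\bar V$ appearing in Section~5), after which your WCP-transfer argument goes through with only cosmetic changes.
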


In this stage we would like to remark that, it would be desirable if one could present the quantum group version of  the property $(P_2)_\pi$ for $U\in\CR(\G)$, in the sense of \cite[Definition 4.1]{Bek}.

\section{\bf The right actions}

Since Definition \ref{leftact} and all of its consequences are given in terms of left actions, a natural question that arises is what does happen if we reset our definition and results with the right actions. Here we explain this.

Let $\HH$ and $\NN$ be a Hopf-von Neumann algebra and a von Neumann algebra, respectively.
A right action of $\HH$ on  $\NN$ is a normal, injective $*$-homomorphism
$\delta:\NN\lto\NN\vt\M$ such that $(\delta\otimes\iota)\delta=(\iota\otimes\Gamma)\delta$. A state $N$ of $\NN$ is called a $\delta$-right invariant mean if
\beq*
N((\iota\otimes\omega)(\delta(y)))=\omega(1)N(y)\fs\fs(\omega\in\M_*,~y\in\NN).
\eeq*
If there is a $\delta$-right invariant mean on $\NN$, we call $(\delta,\NN)$ right amenable. Two important example of such actions are as follows;
\bit
\item[(i)]The comultiplication $\Gamma$ is a right action of $\HH$ on $\M$ and $\HH$ is amenable if and only if the pair $(\Gamma,\M)$ is right amenable.
\item[(ii)] If $V\in\CR'(\HH)$; i.e. there is a Hilbert space $H_V$ such that $V$ is a unitary element of $\B(H_V)\vt\M$ with $(\iota\otimes\Gamma)(V)=V_{13}V_{12}$, then $\delta_V:\B(H_V)\lto \B(H_V)\vt\M$, where
     $\delta_V(x)=V^*(x\otimes 1)V\ (x\in\B(H_V))$,
 is a right action of $\HH$ on $\B(H_V)$ and $(\delta_V,\B(H_V))$ is right amenable if and only if $V$ is right amenable in the sense of \cite[Defiition 4.1]{B-T}. Moreover, $(\iota\otimes\Gamma)(V^*)=V^*_{12}V^*_{13}$ and  $\delta_{V^*}$ is also  a right action of $\HH$ on $\B(H_V)$. $(\delta_{V^*},\B(H_V))$ is right amenable if and only if $V$ is left amenable in the sense of \cite[Defiition 4.1]{B-T}.
\eit

If $\delta$ is a right action, then $\sigma\delta$ will be a left action of $\HH^{{\rm op}}=(\M,\Gamma^{{\rm op}})$ on $\NN$, where $\sigma$ denotes the flip map and $\Gamma^{{\rm op}}=\sigma\Gamma$ denotes the opposite comultiplication. So we can easily prove the right version of results presented in the left case. It should be mentioned that we must replace $\RUC(\delta,\NN)$ instead of $\LUC(\NN,\alpha)$ in the right version of Lemma \ref{amen}, where  $\RUC(\delta,\NN)$ is defined by
$$\RUC(\delta,\NN)=\overline{\rm span}\{(\iota\otimes\omega)\delta(y) ;\fs y\in\NN, \omega\in\M_*\}.$$
Since $\RUC(\Gamma,\M)=\RUC(\G)$ as defined in \cite[Definition 2.2]{R_uniform}, the right
version of Lemma \ref{amen} covers \cite[Theorem 3.4]{R_uniform} and \cite[Proposition 4.7]{Ng}.\\

Let $V\in\CR'(\G)$ act on the Hilbert space $H_V$. Set $\bar{V}=(j\otimes R)(V)$, then $\bar{V}\in\CR'(\G)$ and $H_{\bar{V}}=\bar{H}_V$, where $j:\B(H_V)\lto\B(\bar{H}_V)$ is the canonical anti-isomorphism given by $j(x)(\bar{\xi})=\overline{x^*(\xi)}$ $(\bar{\xi}\in\bar{H}_V)$ and $\bar{H}_V$ is the conjugate Hilbert space of $H_V$. Let $\mathcal{HS}(H_V)$ denote the Hilbert space of the Hilbert-Schmidt operators  on $H_V$. Let $\Theta:H_V\otimes\bar{H}_V\lto\mathcal{HS}(H_V)$  be the canonical isometric isomorphism which is given by  $\Theta(\xi\otimes\bar{\eta})(\zeta)=\langle\zeta,\eta\rangle\xi$ \ $(\xi,\zeta\in H_V, \bar{\eta}\in\bar{H}_V)$. Define a normal unital $*$-isomorphism
$$\widetilde{\Theta}:\B(H_V\otimes\bar{H}_V)\lto\B(\mathcal{HS}(H_V))\fs\fs{\rm by}\fs
\widetilde{\Theta}(T)=\Theta T\Theta^*\fs(T\in\B(H_V\otimes\bar{H}_V)),$$
also define the Hilbert-Schmidt operator $\HS{V}$  by $\HS{V}=(\widetilde{\Theta}\otimes\iota)(V\times\bar{V})$, where $V\times\bar{V}\in\CR'(\G)$ is given by $V\times \bar{V}=V_{13}\bar{V}_{23}$. Then $\HS{V}\in\CR'(\G)$ and $H_{\HS{V}}=\mathcal{HS}(H_V)$.

Now apply the right case version of Theorem \ref{thmp2} for $V\in\CR'(\G)$ to obtain the next result which provides some equivalences for the property $(\check{P}_2)$ of $V^*$, as \cite[Definition 4.2(a)]{Ng}; see \cite[Proposition 4.5]{Ng}.
\begin{cor}
Let $V\in\CR'(\G)$ act on the Hilbert space $H_V$. Then the following assertions are equivalent:
\bit
\item[(i)] $\HS{V}$ has the WCP.
\item[(ii)] $V^*$ has property $(\check{P}_2)$.
\item[(iii)] $(\delta_{\HS{V}},\B(\mathcal{HS}(H_V)))$ has Reiter's property $P_2$.
\item[(iv)] $(\delta_{(V\times\bar{V})},\B(H_V\otimes\bar{H}_V))$ has Reiter's property $P_2$.
\eit
\end{cor}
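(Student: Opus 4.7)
The plan is to take $(\mathrm{iii})$ as the hub. I would derive $(\mathrm{i})\Leftrightarrow(\mathrm{iii})$ directly from the right-handed analogue of Theorem \ref{thmp2}, then transport the equivalence to $(\mathrm{iv})$ through the spatial $*$-isomorphism $\widetilde{\Theta}$, and finally read off $(\mathrm{i})\Leftrightarrow(\mathrm{ii})$ from \cite[Proposition 4.5]{Ng}.

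For $(\mathrm{i})\Leftrightarrow(\mathrm{iii})$, the crucial remark is that for any $W\in\CR'(\G)$ the right action $\delta_W(x)=W^*(x\otimes1)W$ on $\B(H_W)$ has its canonical unitary implementation (in the sense of \cite[Definition 3.6]{V_imp}, now on the opposite quantum group) equal to $W$ itself; this is the right-handed analogue of $\alpha=\alpha_{U_\alpha}$ used in Theorem \ref{thmp2}, accessible via the flip $\sigma$ as sketched at the beginning of Section 5. The right-case version of Theorem \ref{thmp2}, specialised to $W=\HS{V}$, therefore says that $\HS{V}$ has the WCP if and only if $(\delta_{\HS{V}},\B(\mathcal{HS}(H_V)))$ has Reiter's property $P_2$, which is exactly $(\mathrm{i})\Leftrightarrow(\mathrm{iii})$.

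For $(\mathrm{iii})\Leftrightarrow(\mathrm{iv})$, I would use that $\widetilde{\Theta}$ is a normal unital $*$-isomorphism spatially implemented by the unitary $\Theta:H_V\otimes\bar{H}_V\to\mathcal{HS}(H_V)$. Since $\HS{V}=(\widetilde{\Theta}\otimes\iota)(V\times\bar{V})$, a direct calculation gives
\[
(\widetilde{\Theta}\otimes\iota)\circ\delta_{V\times\bar{V}}=\delta_{\HS{V}}\circ\widetilde{\Theta}.
\]
Choosing compatible n.s.f.\ weights so that the two GNS Hilbert spaces identify with $H_V\otimes\bar{H}_V$ and $\mathcal{HS}(H_V)$ respectively, one verifies the intertwining identity
\[
(\iota\otimes\Theta)\circ F^{\delta_{V\times\bar{V}}}_{a,b}[\xi]=F^{\delta_{\HS{V}}}_{a,b}[\Theta\xi]\qquad(\xi\in H_V\otimes\bar{H}_V,\ a,b\in\CL),
\]
so $\Theta$ transports any Reiter net of unit vectors for one action to a Reiter net for the other. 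Finally, $(\mathrm{i})\Leftrightarrow(\mathrm{ii})$ is \cite[Proposition 4.5]{Ng}: property $(\check{P}_2)$ of $V^*$ is characterised there by the WCP of the product corepresentation $V\times\bar{V}$, and this is the spatial twin of $\HS{V}$ via $\widetilde{\Theta}$.

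The step I expect to require the most care is $(\mathrm{iii})\Leftrightarrow(\mathrm{iv})$: because Reiter's $P_2$ in Definition \ref{P2} is phrased in terms of the injective tensor norm on $\CL\inj(H_\theta)_c$, one must ensure that the column operator space structure is respected when passing between the two concrete GNS Hilbert spaces. Once the weight $\theta$ is chosen compatibly with $\Theta$, the unitarity of $\Theta$ and the functoriality of the column structure reduce this to the displayed intertwining identity above, after which the rest is routine.
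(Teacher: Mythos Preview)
Your proposal is correct and follows the same route as the paper. The paper's proof records only the intertwining identity $F^{\delta_{\HS{V}}}_{a,b}[\xi]=F^{\delta_{V\times\bar{V}}}_{a,b}[\Theta^*(\xi)]$ (your $(\iota\otimes\Theta)$-version, up to the identification along $\Theta$) and declares the rest straightforward, the remaining implications being drawn from the right-handed Theorem~\ref{thmp2} and \cite[Proposition~4.5]{Ng} exactly as you describe.
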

\begin{proof}
Since for every $\xi\in\mathcal{HS}(H_V)$ we have $F^{\delta_{_{V_{{\rm HS}}}}}_{a,b}[\xi]=F^{\delta_{V\times\bar{V}}}_{a,b}[\Theta^*(\xi)]$,  the proof is straightforward.
\end{proof}

\bibliographystyle{alpha}

\begin{thebibliography}{20}

\bibitem{B-C-T.rep}{E. B\'{e}dos, R. Conti and L. Tuset, }{\it On amenability and co-amenability
for algebraic quantum groups and their representations, } Can. J. Math. {\bf 57 }No. 1 (2005);  17--60.

\bibitem{B-T}{E. B\'{e}dos and L. Tuset, }{\it Amenability and co-amenability
for locally compact quantum groups, } Intern. J. Math. {\bf 14}
(2003); 865--884.

\bibitem{Bek}{M. E. B. Bekka,  }{\it Amenable representations of locally compact groups, } Invent. Math.
{\bf 100} (1990); 383--401.

\bibitem{Dal}{H. G. Dales, }{ Banach Algebras and Automatic Continuity, } {\it London Mathematical
Society Monographs (New Series) {\bf 24}, Clarendon Press,} 2000.

\bibitem{E-R}{E. G. Effros and Z.-J. Ruan, }{ Operator spaces, } {\it London Mathematical Society Monographs (New Series) {\bf 23}, Clarendon Press,} 2000.

\bibitem{Enok}{M. Enock, }{\it Produit crois\'{e} d'une alg\`{e}bre de von Neumann
par une alg\`{e}bre de Kac, } J. Funct. Anal. {\bf 26} (1977); 16--47.

\bibitem{E-S}{M. Enock and J.-M. Schwartz, }{ Kac Algebras and
Duality of Locally Compact Groups, }{\it  Springer-Verlag, New York,}
1992.

\bibitem{E-S.P}{M. Enock and J.-M. Schwartz, }{\it Alg\`{e}bres de Kac moyennables, } Pacific J.
Math. {\bf 125} (1986); 363--379.

\bibitem{K-V}{J. Kustermans and S. Vaes, }{\it Locally compact quantum
groups, } Ann. Scient. Ec. Norm. Sup. (4) {\bf 33}
(2000); 837--934.

\bibitem{K-V_von}{J. Kustermans and S. Vaes, }{\it Locally compact quantum
groups in the von Neumann algebraic setting, } Math. Scand. {\bf 92}
(2003); 68--92.

\bibitem{L-P}{A. T. Lau and A. L. T. Paterson, }{\it Group amenability properties for von Neumann algebras, } Indi. Univ. Math. J. {\bf 55}, No. 4 (2006); 1363--1388.

\bibitem{Ng}{C. K. Ng, }{\it  Amenable representations and Reiter's property for Kac algebras, } J. Funct. Anal. {\bf 187} (2001); 163--182.

\bibitem{Pier}{J.-P. Pier, }{ Amenable Locally Compact Groups, }{\it  Wiley-Interscience,} 1984.

\bibitem{R-V}{M. Ramezanpour and H. R. E. Vishki,} {\it Module homomorphisms and multipliers on locally compact quantum groups,} J. Math. Anal. Appl. (2009), doi: 10.1016/j.jmaa.2009.03.059.

\bibitem{Run}{V. Runde, }{\it Characterizations of compact and discrete quantum groups through second
duals,} J. Operator Theory, {\bf 60}, No. 2 (2008); 415--428.

\bibitem{R_uniform}{V. Runde, }{\it Uniform continuity over locally compact quantum groups, } J. London Math. Soc.
To appear.

\bibitem{R_ret}{V. Daws and M. Runde, }{\it  Reiter's property $(P_1)$ and $(P_2)$ for locally compact quantum groups, } ArXiv:math.OA/0705.3432v4 (2008).

\bibitem{Stok}{R. Stokke, }{\it  Quasi-central bounded approximate units in group algebras of locally compact
groups, } Illinois J. Math. {\bf 48}, No. 1 (2004); 151--170.

\bibitem{Tak}{M. Takesaki, }{Theory of Operator Algebras, I, II, }
{\it Ency. Math. Applic. {\bf 124,125},  New York,} 2002.

\bibitem{V_imp}{S. Vaes, }{\it  The unitary implementation of a locally compact quantum group action, } J. Funct. Anal. {\bf 180} (2001); 426--480.

\bibitem{Van}{A. Van Daele, }{\it Locally compact quantum groups.
A von Neumann algebra approach., } ArXiv:math.OA/0602212 v1
(2006).

\bibitem{W_mult}{S. L. Woronowicz, }{\it  From multiplicative unitaries to quantum groups, } Intern. J. Math. {\bf 7} (1996); 127--149.

\end{thebibliography}

\end{document}